\newtheorem{theorem}{Theorem}[section]
\newtheorem{lemma}[theorem]{Lemma}
\newtheorem{proposition}[theorem]{Proposition}
\newtheorem{corollary}[theorem]{Corollary}
\newtheorem{example}[theorem]{Example}
\theoremstyle{definition}
\newtheorem{remark}[theorem]{Remark}
\DeclareMathOperator{\GL}{GL}
\DeclareMathOperator{\fdeg}{fdeg}
\DeclareMathOperator{\fsemideg}{fsemideg}
\DeclareMathOperator{\SL}{SL}
\DeclareMathOperator{\SP}{Sp}
\DeclareMathOperator{\diag}{diag}
\title{Characterization of polynomials by their invariance properties}
\author[J. M. Almira]{Jose María Almira}
\address{J. M. Almira, Depto. Ingenier\'{\i}a y Tecnolog\'{\i}a de Computadores, Universidad de Murcia, 30100 Murcia, SPAIN}
\email{jmalmira@um.es}
\author[Y.-Q. Hu]{Ya-Qing Hu}
\address{Ya-Qing Hu, School of Mathematics, Shandong University, Jinan 250100, Shandong, P.R. CHINA}
\email{yaqinghu@sdu.edu.cn}
\begin{document}
\keywords{Functional equations, Generalized functions, Exponential polynomials, Montel type theorem, Classical groups, Invariance under affine transformations}
\subjclass[2020]{39B05, 39B22, 39B52, 39A70}
\begin{abstract}
We prove that certain classical groups $G\subseteq \GL(d,\mathbb{R}^d)$ serve to characterize ordinary polynomials in $d$ real variables as elements of finite-dimensional subspaces of $C(\mathbb{R}^d)$ that are invariant by changes of variables induced by translations and elements of $G$. 
We also show that, if the field $\mathbb{K}$ has characteristic $0$, the elements of $\mathbb{K}[x_1,\cdots,x_d]$ admit a similar characterization for $G=\GL(d,\mathbb{K})$.   \par\vspace{0.5cm}
\hfill In memory of Prof. János Aczel, the master who inspired us all.
\end{abstract}
\maketitle
\markboth{J. M. Almira, Y.-Q. Hu}{Characterization of Polynomials}
\section{Motivation}
Given a commutative semigroup $(S,+)$ and a commutative group $(H,+)$, polynomial functions $f:S\to H$ of functional degree $\fdeg(f)\leq n$ are solutions of Fréchet's mixed functional equation: 
\begin{equation}
\label{Fmixed}
\Delta_{h_1}\Delta_{h_2}\cdots \Delta_{h_{n+1}}f=(\tau_{h_1}-\tau_0)(\tau_{h_2}-\tau_0)\cdots(\tau_{h_{n+1}}-\tau_0)f=0, \quad\forall  h_1,\cdots,h_{n+1}\in S, 
\end{equation}
and semipolynomial functions $f:S\to H$ of functional degree $\fsemideg(f)\leq n$ are solutions of Fréchet's unmixed functional equation: 
\begin{equation}
\label{Funmixed}\Delta_{h}^{n+1}f=(\tau_h-\tau_0)^{n+1}f=0, \quad\forall h\in S.
\end{equation}
Here, $\tau_h$ (resp. $\Delta_h$) is the translation (resp. forward finite difference) operator defined as $\tau_hf(x)=f(x+h)$ (resp. $\Delta_hf(x)=\tau_hf(x)-\tau_0f(x)$), 
and $\Delta_{h_1}\cdots\Delta_{h_{n+1}}f$ and $\Delta_h^{n+1}f=\Delta_h(\Delta_h^n f)$, $n=1,2,\cdots$, results from composition. 
Finally, if $H=\mathbb{K}$ is a field, then the quasipolynomial functions $f:S\to H$ are functions $f:S\to H$ satisfying the following condition: 
\begin{equation}
\label{quasi}
\dim_H {\rm span}_H\{\tau_hf: h\in S\}<\infty.
\end{equation}
All of these definitions make sense for the Schwartz distributions. 
Indeed, for $f\in \mathcal{D}(\mathbb{R}^d)'$, 
the space of real-valued Schwartz distributions defined on $\mathbb{R}^d$, 
we can introduce the translation and forward finite difference operators in the following natural way: 
\[
\tau_h (f)\{\phi\} =f\{\tau_{-h}(\phi)\}
\quad\text{and}\quad
\Delta_h(f)\{\phi\} =f\{\Delta_{-h}(\phi)\}, 
\]
where $h\in\mathbb{R}^d$ and $\phi\in\mathcal{D}(\mathbb{R}^d)$ is any test function, 
and study the distributions satisfying equations \eqref{Fmixed} and \eqref{Funmixed}, respectively. 
\par
There is a big tradition in studying polynomial, semipolynomial, and quasipolynomial functions. 
In particular, when $S=\mathbb{R}^d$ and $H=\mathbb{R}$, the relationship between these functions and ordinary polynomials $p\in\mathbb{R}[x_1,\cdots,x_d]$ is well known.  
For example, several versions of a classical theorem by Djokovich \cite{D} state that Fréchet's mixed and unmixed functional equations \eqref{Fmixed} and \eqref{Funmixed} are equivalent under very mild hypotheses \cite{La}, and, in particular, this holds when $H=\mathbb{R}$. 
Moreover, any continuous solution $f\in C(\mathbb{R}^d)$ of these equations is an ordinary polynomial \cite{F,F_1929}, 
and the same holds for the real-valued distributions defined on $\mathbb{R}^d$ \cite{AA,A_2023}. 
In addition, Anselone-Korevaar's theorem \cite{AK} states that quasipolynomial functions $f:\mathbb{R}^d\to\mathbb{R}$ are exponential polynomials. 
In other words, they are finite sums of the form
\[
f(x)=\sum_{k=0}^sp_k(x)e^{\langle \lambda_k,x\rangle}
\]
for certain ordinary polynomials $p_k(x)\in\mathbb{R}[x_1,\cdots,x_d]$ and vectors $\lambda_k\in\mathbb{R}^d$,
where $\langle x,y\rangle=\sum_{i=1}^dx_iy_i$ is the usual inner product of $\mathbb{R}^d$. 
\par
Moreover, as an extension of \cite[Lemma 15]{La}, the following result holds (the proof is included for the sake of completeness):
\begin{theorem}[Montel theorem for mixed differences]
\label{Montel_commt_mixed} 
Assume that 
either \emph{(a)} $f: G\to H$ is a map between two commutative groups and $E\subseteq G$ is a generating set for $G$, 
or \emph{(b)} $f:G\to H$ is a continuous map between two topological groups and $E$ generates a dense subgroup of $G$. 
If 
\begin{equation}
\label{montcomm}
\Delta_{h_1}\cdots\Delta_{h_{n}}f=0, \quad\forall h_1,\cdots,h_{n}\in E,
\end{equation}
then $f$ is a polynomial map of functional degree at most $n-1$.
\par
Moreover, if $f\in \mathcal{D}(\mathbb{R}^d)'$, $E$ generates a dense subgroup of $\mathbb{R}^d$, and $f$ satisfies \eqref{montcomm}, then $f$ is an ordinary polynomial of total degree $\leq n-1$ in distributional sense.  
\end{theorem}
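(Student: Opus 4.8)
The plan is to reduce the distributional assertion to two ingredients: the purely group-theoretic mechanism underlying part~(a), and the cited fact that distributional solutions of Fréchet's mixed equation \eqref{Fmixed} are ordinary polynomials. The whole argument amounts to upgrading the hypothesis $\Delta_{h_1}\cdots\Delta_{h_n}f=0$ for $h_i\in E$ first to all $h_i$ in the subgroup $\langle E\rangle$ generated by $E$, and then, by continuity of translation on $\mathcal{D}(\mathbb{R}^d)'$, to all $h_i\in\mathbb{R}^d$, at which point the distributional Fréchet theorem applies.

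First I would carry out the algebraic extension from $E$ to $\langle E\rangle$. The key identities are $\tau_{h+k}=\tau_h\tau_k$ (hence $\Delta_{h+k}=\Delta_h+\Delta_k+\Delta_h\Delta_k$) and $\Delta_{-h}=-\tau_{-h}\Delta_h$; since all translation operators commute, so do all the $\Delta_h$ and their compositions. Given $g_1,\dots,g_n\in\langle E\rangle$, writing each $g_i$ as a $\mathbb{Z}$-combination of elements of $E$ and repeatedly applying these identities expands $\Delta_{g_1}\cdots\Delta_{g_n}$ into a finite sum of operators, each of the form $\tau_v\,\Delta_{e_1}\cdots\Delta_{e_m}$ with $v\in\langle E\rangle$, $e_j\in E$ and $m\ge n$. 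Using commutativity of the $\Delta$'s, every such operator annihilates $f$ (apply the last $n$ factors first, which kills $f$ by hypothesis, and then note $\tau_v$ preserves $0$); hence $\Delta_{g_1}\cdots\Delta_{g_n}f=0$ for all $g_i\in\langle E\rangle$. This is exactly the mechanism behind part~(a).

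Next I would promote this to all of $\mathbb{R}^d$ by a density argument exploiting the continuity of translation for distributions. For a fixed test function $\phi\in\mathcal{D}(\mathbb{R}^d)$,
\[
\langle \Delta_{h_1}\cdots\Delta_{h_n}f,\phi\rangle=\langle f,\Delta_{-h_1}\cdots\Delta_{-h_n}\phi\rangle ,
\]
and the map $(h_1,\dots,h_n)\mapsto\Delta_{-h_1}\cdots\Delta_{-h_n}\phi$ is continuous from $(\mathbb{R}^d)^n$ into $\mathcal{D}(\mathbb{R}^d)$: each finite difference is a finite combination of translates of $\phi$, and over any bounded set of parameters the translates keep their supports inside a common compact set, so all derivatives converge uniformly. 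Consequently $(h_1,\dots,h_n)\mapsto\langle\Delta_{h_1}\cdots\Delta_{h_n}f,\phi\rangle$ is continuous, and as it vanishes on the dense set $\langle E\rangle^n$ it vanishes identically. Since $\phi$ was arbitrary, $\Delta_{h_1}\cdots\Delta_{h_n}f=0$ for all $h_1,\dots,h_n\in\mathbb{R}^d$; that is, $f$ satisfies \eqref{Fmixed} with $\fdeg(f)\le n-1$. Invoking the cited distributional version of Fréchet's theorem \cite{AA,A_2023} then yields that $f$ is an ordinary polynomial of total degree $\le n-1$ in the distributional sense.

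I expect the main obstacle to lie in the precise justification, in the second step, that $(h_1,\dots,h_n)\mapsto\Delta_{-h_1}\cdots\Delta_{-h_n}\phi$ is continuous \emph{into} $\mathcal{D}(\mathbb{R}^d)$ and not merely pointwise in $x$: because $\mathcal{D}(\mathbb{R}^d)$ carries the inductive-limit topology, one must control the supports of the translated test functions uniformly over the parameter range, which is exactly what legitimizes the passage from the dense subgroup $\langle E\rangle$ to all of $\mathbb{R}^d$.
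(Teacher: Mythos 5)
Your proposal is correct, and it rests on the same engine as the paper's proof: the identities $\Delta_{h_1+h_2}=\Delta_{h_1}\Delta_{h_2}+\Delta_{h_1}+\Delta_{h_2}$ and $\Delta_{-h}=-\tau_{-h}\Delta_h$, used to push \eqref{montcomm} from $E$ to the subgroup $\langle E\rangle$ generated by $E$, followed by a density-plus-continuity step. The differences are organizational but worth recording. For the algebraic step, you expand $\Delta_{g_1}\cdots\Delta_{g_n}$ in one shot into an integer combination of operators $\tau_v\Delta_{e_1}\cdots\Delta_{e_m}$ with $e_j\in E$ and $m\ge n$, each of which annihilates $f$ because, by commutativity, $n$ of the $\Delta_{e_j}$ already do; the paper instead inducts on $n$, the base case being that $\Gamma=\{h\in G:\Delta_hf=0\}$ is a subgroup containing $E$, and the inductive step applying the case $n-1$ to $g=\Delta_{h_n}f$ and then the case $n=1$ to $\Delta_{h_1}\cdots\Delta_{h_{n-1}}f$ with $h_1,\dots,h_{n-1}\in G$. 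Your expansion needs an inner induction on the word length of the $g_i$ anyway, so neither route is shorter; the paper's version avoids bookkeeping of expansion terms, while yours displays explicitly how the hypothesis gets consumed. For the distributional claim your write-up is in fact more complete than the paper's, which merely asserts that ``the very same arguments apply'' since $\tau_h$ and $\Delta_h$ extend naturally to $\mathcal{D}(\mathbb{R}^d)'$: you isolate the one point that genuinely requires proof, namely the continuity of $(h_1,\dots,h_n)\mapsto\Delta_{-h_1}\cdots\Delta_{-h_n}\phi$ into $\mathcal{D}(\mathbb{R}^d)$ (support control plus uniform convergence of all derivatives), which yields weak-$*$ continuity and hence the passage from the dense set $\langle E\rangle^n$ to $(\mathbb{R}^d)^n$, after which the distributional Fr\'{e}chet theorem of \cite{AA,A_2023} finishes the argument, exactly as the paper implicitly intends. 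The only omission is case (b) (continuous $f$ between topological groups), which you never treat explicitly; it follows from your same density argument applied pointwise to $(h_1,\dots,h_n)\mapsto\Delta_{h_1}\cdots\Delta_{h_n}f(x)$, so nothing of substance is missing, but a sentence to that effect should be added.
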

\begin{remark}
In all that follows, we say that a distribution $f\in \mathcal{D}(\mathbb{R}^d)'$ is \textit{an ordinary polynomial} if it is an ordinary polynomial \textit{in distributional sense}. 
In particular, $f$ is equal to an ordinary polynomial in $d$ real variables almost everywhere, and henceforth, if $f\in C(\mathbb{R}^d)$, the equality holds everywhere.
\end{remark}
\begin{proof}
We proceed by induction on $n$. 
For $n=1$, we consider $\Gamma=\{h\in G: \Delta_hf=0\}$. 
Then $E\subset \Gamma$ is our hypothesis \eqref{montcomm}. 
$(a)$ Assume that $E$ generates $G$. 
For any $h,h_1,h_2\in G$, we have 
\[
\Delta_{h_1+h_2}f=\Delta_{h_1}\Delta_{h_2}f+\Delta_{h_1}f+\Delta_{h_2}f
\quad\text{and}\quad
\Delta_{-h}f=-\tau_{-h}\Delta_hf. 
\]
Therefore, $h,h_1,h_2\in \Gamma$ implies $-h,h_1+h_2\in \Gamma$ and $\Gamma$ is a subgroup of $G$ that contains $E$. 
Hence, $\Gamma=G$ and $f$ is a polynomial map of functional degree $\leq 0$. 
Assume now that the result holds for $n-1$ and take $f: G\to H$ that satisfies \eqref{montcomm}. 
Fix $h_n\in E$ and set $g=\Delta_{h_n}f$. 
Then, 
\[
\Delta_{h_1}\cdots \Delta_{h_{n-1}}g=\Delta_{h_1}\cdots \Delta_{h_{n-1}}\Delta_{h_n}f=0, \quad\forall h_1,\cdots,h_{n-1}\in E,
\]
and the induction hypothesis implies that  
\[
\Delta_{h_1}\cdots \Delta_{h_{n-1}}g=0, \quad\forall h_1,\cdots,h_{n-1}\in G. 
\]
Since the finite difference operators are commutative, we have 
\[
\Delta_{h_n}(\Delta_{h_1}\cdots \Delta_{h_{n-1}}f)=\Delta_{h_1}\cdots \Delta_{h_{n-1}}\Delta_{h_n}f=0, \quad\forall h_n\in E, \quad\forall h_1,\cdots,h_{n-1}\in G.
\]
Since the result holds for $n=1$, we conclude that the above equation holds for all $h_n\in G$.
This proves the result according to the hypotheses in (a). 
If we assume (b), the above computation shows that equation \eqref{montcomm} is satisfied for all steps $h_1,\cdots,h_n$ in a dense subgroup of $G$, and the continuity of $f$ implies that the equation is satisfied for all steps $h_1,\cdots,h_n\in G$. 
Hence, $f$ is a continuous polynomial function of functional degree at most $n-1$. 
\par
The very same arguments apply when $f$ is a real-valued distribution defined on $\mathbb{R}^d$, 
since the operators $\tau_h$ and $\Delta_h=\tau_h-\tau_0$ can be naturally extended to $\mathcal{D}(\mathbb{R}^d)'$ and they inherit the properties of the original operators defined on the space of test functions. 
\end{proof}
In this paper, we investigate polynomials from a different perspective. 
Concretely, we prove that ordinary polynomials defined on $\mathbb{R}^d$ can be characterized by their invariance properties with respect to changes of variables induced by certain affine transformations. 
This topic appeared a long time ago in the research of Loewner \cite{Lo} and, later, of Laird and McCan \cite{LMc}. 
Our contribution is a natural extension of the ideas introduced by the first author in \cite{A}.  
We also prove that ordinary polynomials defined on $\mathbb{K}^d$, 
where $\mathbb{K}$ is any field of characteristic $0$, 
can be characterized by their invariance properties with respect to translations and linear isomorphisms of $\mathbb{K}^d$. 
\section{A general technique}
Let $G$ be a subgroup of $\GL(d,\mathbb{R})$, which acts naturally on $\mathbb{R}^d$ from the left. 
For any $P\in G$ and $h\in\mathbb{R}^d$, we consider operators $O_P,\tau_h:C(\mathbb{R}^d)\to C(\mathbb{R}^d)$ given by 
\[
O_P(g)(x)=g\circ P(x)=g(Px)\quad\text{and}\quad\tau_h(g)(x)=g(x+h)\quad\text{for}\quad g\in C(\mathbb{R}^d).
\]
Moreover, for the distributions $f\in \mathcal{D}(\mathbb{R}^d)'$, we extend the above operators in a natural way:
\[
O_P(f)\{\phi\}=\frac{1}{|\det(P)|}f\{O_{P^{-1}}(\phi)\}\quad\text{and}\quad\tau_h(f)\{\phi\}=f\{\tau_{-h}\phi\}
\] 
where $P\in G$ is any invertible matrix, 
$\phi\in\mathcal{D}(\mathbb{R}^d)$ is any test function,  
$O_{P^{-1}}(\phi)(x)=\phi(P^{-1}x)$, and 
$\tau_{-h}(\phi)(x)=\phi(x-h)$, for all $x\in\mathbb{R}^d$.
\par
Let $X_d$ denote indistinctly $C(\mathbb{R}^d)$ or $\mathcal{D}(\mathbb{R}^d)'$. 
Let $V$ be a vector subspace of $X_d$. 
We say that $V$ is translation invariant if $\tau_h(V)\subseteq V$ for all $h\in\mathbb{R}^d$. 
Moreover, given any subgroup $G\subseteq \GL(d,\mathbb{R})$, we say that $V$ is invariant by elements of $G$ if $O_P(V)\subseteq V$ for all $P\in G$. 
\par
For each subgroup $G\subseteq\GL(d,\mathbb{R})$ and $z_0\in\mathbb{R}^d$, the orbit of $z_0$ in $\mathbb{R}^d$ under the action of $G$ on $\mathbb{R}^d$ is denoted by $Gz_0=\{Pz_0: P\in G\}$ and the set 
\[
\Lambda_{G,z_0}=Gz_0-Gz_0=\{Pz_0-Qz_0: P,Q\in G\}\subseteq \mathbb{R}^d
\]
is defined as the Minkowski difference of $Gz_0$ with itself. 
The group $G$ and the point $z_0$ will be called, respectively, the starting group and the starting point of the set $\Lambda_{G,z_0}$. 
\par
Of course, the same notation can be used if the functions $f$ are defined in $\mathbb{K}^d$ and take values on $\mathbb{K}$ for any field $\mathbb{K}$. 
In such a case, we would take $G\subseteq\GL(d,\mathbb{K})$. 
\par
In this section, we prove the following:
\begin{theorem}
\label{mainT}
Let $G$ be a subgroup of $\GL(d,\mathbb{R})$ and $V$ be a finite-dimensional vector subspace of $X_d$.
If $\Lambda_{G,z_0}$ has a nonempty interior for certain $z_0\in\mathbb{R}^d$, $\tau_h(V)\subseteq V$ for all $h\in\mathbb{R}^d$, and $O_P(V)\subseteq V$ for all $P\in G$, 
then all elements of $V$ are ordinary polynomials in $d$ real variables. 
\end{theorem}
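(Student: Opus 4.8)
The plan is to pass from the two invariance hypotheses to the \emph{spectral structure} of a finite-dimensional translation-invariant space, use the $G$-invariance to control the set of exponents appearing, and only at the very end invoke the geometric hypothesis on $\Lambda_{G,z_0}$ to force every exponent to vanish. First I would note that every $f\in V$ is a quasipolynomial: since $\tau_h(V)\subseteq V$ for all $h$ and $\dim V<\infty$, the space $\mathrm{span}\{\tau_h f:h\in\mathbb{R}^d\}$ is a finite-dimensional subspace of $V$. By Anselone--Korevaar's theorem (in the distributional case, by the analogous structure theory for finite-dimensional translation-invariant spaces of distributions), each such $f$ is an exponential polynomial. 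Complexifying, the commuting family $\{\tau_h\}$ decomposes $V\otimes\mathbb{C}=\bigoplus_{\lambda\in\Sigma(V)}V_\lambda$ into generalized eigenspaces $V_\lambda=\{v:(\tau_h-e^{\langle\lambda,h\rangle})^N v=0\text{ for all }h\text{ and }N\gg 0\}$, where $\Sigma(V)\subseteq\mathbb{C}^d$ is a \emph{finite} set of exponents (complex exponents being needed to capture oscillatory summands). Elements of $V_\lambda$ have the shape $p(x)e^{\langle\lambda,x\rangle}$, so it suffices to prove that $\Sigma(V)=\{0\}$: then each $f\in V$ is a sum of terms $p_k(x)e^{\langle 0,x\rangle}=p_k(x)$, i.e.\ an ordinary polynomial.

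Next I would record the commutation relation $\tau_h\circ O_P=O_P\circ\tau_{Ph}$, which holds verbatim on $C(\mathbb{R}^d)$ and, the scalar $|\det P|^{-1}$ commuting through, also on $\mathcal{D}(\mathbb{R}^d)'$. Because $P\in G$ and $P^{-1}\in G$ both preserve $V$ and $O_{P^{-1}}=O_P^{-1}$, the operator $O_P$ restricts to a linear automorphism of the finite-dimensional space $V$ (and of $V\otimes\mathbb{C}$). From $(\tau_h-c)\,O_P v=O_P\,(\tau_{Ph}-c)\,v$ together with $\langle\lambda,Ph\rangle=\langle P^{T}\lambda,h\rangle$ one reads off $O_P(V_\lambda)=V_{P^{T}\lambda}$; hence the exponent set is invariant under the transpose action, $P^{T}\Sigma(V)=\Sigma(V)$ for every $P\in G$. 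In particular, for each $\lambda\in\Sigma(V)$ the orbit $\{P^{T}\lambda:P\in G\}$ lies in the finite set $\Sigma(V)$ and is therefore finite.

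Finally I would bring in the starting point $z_0$. Fix $\lambda\in\Sigma(V)$. Using $\langle\lambda,Pz_0\rangle=\langle P^{T}\lambda,z_0\rangle$ and the finiteness of the orbit $\{P^{T}\lambda:P\in G\}$, the set of scalars $\langle\lambda,Gz_0\rangle$ is finite, whence $\langle\lambda,\Lambda_{G,z_0}\rangle=\langle\lambda,Gz_0\rangle-\langle\lambda,Gz_0\rangle$ is finite as well. But by hypothesis $\Lambda_{G,z_0}$ contains an open ball $B$, and $v\mapsto\langle\lambda,v\rangle$ is continuous; the image $\langle\lambda,B\rangle$ of the connected set $B$ is therefore a connected subset of a finite set, hence a single point. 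A linear functional constant on an open ball must be identically zero, so $\lambda=0$. This forces $\Sigma(V)=\{0\}$ and completes the argument.

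I expect the main obstacle to be Step~1 in the distributional setting: Anselone--Korevaar is quoted for functions $f:\mathbb{R}^d\to\mathbb{R}$, so for $X_d=\mathcal{D}(\mathbb{R}^d)'$ one must supply the corresponding structure theorem guaranteeing that a finite-dimensional translation-invariant space of distributions splits into generalized eigenspaces with a finite, well-defined exponent set $\Sigma(V)$ on which $O_P$ acts equivariantly. Once that spectral bookkeeping is in place, the commutation relation and the geometric endgame are short and robust, working identically for $C(\mathbb{R}^d)$ and for distributions.
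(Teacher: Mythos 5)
Your argument is correct, but it takes a genuinely different route from the paper's. The paper deliberately avoids exponential polynomials (it highlights this at the start of Section \ref{Dist}): it uses Frobenius' simultaneous triangularization (Corollary \ref{coFro}) to arrange the eigenvalues of the commuting family $\{\tau_y|_V\}$ multiplicatively, builds from the spectrum of $\tau_{z_0}$ a single real polynomial $q$ with $q(0)\neq 0$ which, by Cayley--Hamilton and the identity $\Lambda_{G,z_0}=G(Gz_0-z_0)$ of \eqref{decomposition}, annihilates $\tau_z$ for every $z\in\Lambda_{G,z_0}$, and then applies Lemma \ref{pre}, whose proof requires the Montel-type Theorem \ref{Montel_commt_mixed} together with Kronecker's theorem precisely because the annihilation is only known for steps in a ball. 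You instead invoke Anselone--Korevaar at the outset to obtain the joint generalized-eigenspace decomposition $V\otimes\mathbb{C}=\bigoplus_{\lambda\in\Sigma(V)}V_\lambda$, derive the equivariance $P^{T}\Sigma(V)=\Sigma(V)$ from $\tau_h O_P=O_P\tau_{Ph}$, and let the nonempty-interior hypothesis enter through a transparent connectedness argument that kills every exponent. Both proofs are sound, and the trade-offs are real: your reliance on the distributional version of Anselone--Korevaar is legitimate (the paper itself cites exactly this statement in Section \ref{Dist}), but it makes your proof less self-contained than the paper's, which needs only Frobenius and its own Montel theorem; in exchange your endgame is simpler and quantitatively better, since $\Sigma(V)=\{0\}$ gives $\Delta_h^{N}f=0$ for \emph{all} $h\in\mathbb{R}^d$ with $N=\dim V$, so the classical Fr\'echet theorem applies directly (no ball/Kronecker machinery) and the degree bound $\dim V-1$ falls out immediately, whereas the paper's annihilating polynomial has degree up to $N\cdot N!$ and the sharper bound is recovered only by the separate argument of Remark \ref{primera}. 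One point you should make explicit in a write-up: the identification of the joint eigenvalue characters as $h\mapsto e^{\langle\lambda,h\rangle}$, rather than arbitrary homomorphisms $\mathbb{R}^d\to\mathbb{C}^{*}$, needs the continuity argument (a common eigenvector $v$ gives $\chi(h)=v(x_0+h)/v(x_0)$, resp. $\chi(h)=v\{\tau_{-h}\phi\}/v\{\phi\}$ in the distributional case); this is the actual content of Anselone--Korevaar, so it is the step for which the citation must carry the weight, not merely the slogan that each $f$ is an exponential polynomial.
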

\begin{remark}
\label{primera}
It is interesting to know for which $f\in X_d$ we have $\dim R_G(f)=N<\infty$, 
where $R_G(f)$ denotes the smallest subspace of $X_d$ that contains $f$ and is invariant by operators $\tau_h$ and $O_P$ for all $h\in\mathbb{R}^d$ and $P\in G$. 
If $\Lambda_{G,z_0}$ has a nonempty interior, Theorem \ref{mainT} proves that $f$ is an ordinary polynomial in $d$ real variables. 
The functional degree of $f$ (which is the total degree of $f$ as an ordinary polynomial) can be bounded in terms of the dimension of $R_G(f)$. 
Indeed, if $f$ is a nonzero constant, then $R_G(f)$ has dimension $1$; 
If $f$ is an ordinary polynomial of total degree exactly $n\ge1$,  
we can choose the steps $h_1,h_2,\cdots,h_n\in\mathbb{R}^d$ such that the functions $f,\Delta_{h_1}f, \Delta_{h_2}\Delta_{h_1}f,\cdots,\Delta_{h_n}\Delta_{h_{n-1}}\cdots\Delta_{h_1}f$ are linearly independent 
(this is because, for every ordinary polynomial $g$, and every nonzero step $h$, $\Delta_h g$ strictly reduces the total degree of $g$, and $\Delta_{h_n}\Delta_{h_{n-1}}\cdots \Delta_{h_1}f$ is a nonzero constant for a certain choice of the steps $h_1,\cdots,h_n$,
since the total degree and the functional degree of $f$ coincide). 
Moreover, all these functions belong to $R_G(f)$, which means that $n\leq \dim R_G(f)-1$. 
\par
In certain cases, the relationship between $n$ and $\dim R_G(f)$ can be strengthened, and what usually happens is that $\dim R_G(f)$ is much larger than $n$. 
Indeed, if we denote by $x^a=\prod_{i=1}^dx_i^{a_i}$ the monomial of total degree $|a|=\sum_{i=1}^n a_i$, 
where $x=(x_1,\ldots,x_d)$ and $a=(a_1,\ldots,a_d)\in\mathbb{N}^d$, 
then $f(x)=\sum_{|a|\leq n}c_{a}x^a\in\mathbb{R}[x_1,\ldots,x_d]$ is an ordinary polynomial in $d$ real variables of total degree at most $n$. 
It is clear that $R_G(f)$ is a subspace of the vector space of ordinary polynomials in $d$ real variables of total degree at most $n$, and henceforth its dimension is bounded by that of the latter, i.e., $\dim R_G(f)\leq \binom{n+d}{d}$ for all $G$, and sometimes two quantities coincide. 
Thus: 
\[
\dim R_G(f)\le\binom{n+d}{d}=\frac{(n+d)!}{n!d!}\le(n+1)^d. 
\]
implies that $n\ge\sqrt[d]{\dim R_G(f)}-1$. 
On the other hand, in case that $\dim R_G(f)=\binom{n+d}{d}$, we get $\binom{n+d}{d}\ge\frac{(n+1)^d}{d!}$, 
which implies that $(n+1)^d\leq d!\dim R_G(f)$ and, henceforth,
\[
n\le\sqrt[d]{d!\dim R_G(f)}-1
\]
-- a much better bound than $n\leq \dim R_G(f)-1$.
\end{remark}
\begin{lemma}[Generalization of Fr\'{e}chet's Theorem]
\label{pre}
Let $f\in X_d$. 
Assume that $q(\tau_y)f=0$ for all $y$ with $\|y-z_0\|\leq \delta$, for certain $z_0\in\mathbb{R}^d$, $\delta>0$, and  polynomial $q(z)=a_0+a_1z+\cdots a_nz^n$ with $a_0\neq 0$. 
Then $f$ is an ordinary polynomial in $d$ real variables of total degree at most $n-1$. 
\end{lemma}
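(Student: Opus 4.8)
The plan is to reduce the statement to the assertion that every distributional partial derivative of $f$ of order $n$ vanishes. Once this is known, the standard fact that a distribution all of whose order-$n$ partials vanish is an ordinary polynomial of total degree at most $n-1$ (proved by iterating the one-dimensional statement that a distribution with zero derivative is constant) finishes the proof, and for $f\in C(\mathbb{R}^d)$ the polynomial equality then holds everywhere. We may assume $\deg q=n$, that is $a_n\neq0$: if $q$ degenerates to the nonzero constant $a_0$ the hypothesis reads $a_0f=0$, forcing $f=0$, while if $q$ has degree $m$ with $1\le m<n$ the argument below, stopped at order $m$, yields the stronger bound $\deg f\le m-1$. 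We regard $f$ throughout as an element of $\mathcal{D}(\mathbb{R}^d)'$, which also covers the continuous case.

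The engine of the proof is differentiation of the relation $q(\tau_y)f=0$ in the \emph{step} variable $y$, which is legitimate precisely because $y$ ranges over a set with nonempty interior, namely an open ball $B$ of admissible steps. Fixing a test function $\phi$, the scalar function $y\mapsto (q(\tau_y)f)\{\phi\}=\sum_{k=0}^n a_k\,f\{\tau_{-ky}\phi\}$ is $C^\infty$ (only the smooth function $\phi$ is translated) and vanishes identically on $B$, so all of its $y$-derivatives vanish there as well. A direct computation in the distributional pairing, in which the factor $-k$ produced by differentiating $\phi(x-ky)$ and the sign produced by transferring $\partial_{x_l}$ from $\phi$ back onto $f$ cancel, gives the identity $\partial_{y_l}\tau_{ky}f=k\,\tau_{ky}\partial_{x_l}f$. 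Summing against the coefficients $a_k$ and using $\sum_k k a_k z^k=z\,q'(z)$ we obtain
\[
\partial_{y_l}\big(q(\tau_y)f\big)=\tau_y\,q'(\tau_y)\,\partial_{x_l}f=0\qquad\text{on }B,
\]
and, cancelling the invertible operator $\tau_y$, $q'(\tau_y)\,\partial_{x_l}f=0$ on $B$ for each coordinate $l$.

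Iterating this step is the heart of the matter. I would prove by induction on $r$, for $0\le r\le n$, that
\[
q^{(r)}(\tau_y)\,\partial^{\alpha}f=0\quad\text{on }B\qquad\text{for every multi-index }\alpha\text{ with }|\alpha|=r.
\]
The inductive step differentiates the order-$r$ identity in $y_l$, which by the same mechanism replaces $q^{(r)}$ by $\tau_y\,q^{(r+1)}$ and $\partial^{\alpha}f$ by $\partial_{x_l}\partial^{\alpha}f$; cancelling $\tau_y$ and letting $l$ and $\alpha$ vary produces the order-$(r+1)$ identity. Since $a_n\neq0$, each $q^{(r)}$ has degree $n-r\ge0$ and is never the zero polynomial, so the induction does not stall; at $r=n$ it degenerates to the nonzero constant $q^{(n)}=n!\,a_n$, whence $n!\,a_n\,\partial^{\alpha}f=0$, i.e. $\partial^{\alpha}f=0$ for all $|\alpha|=n$. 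This is exactly the vanishing asserted in the first paragraph.

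The main obstacle is not conceptual but a matter of rigor and bookkeeping: one must justify differentiating the equation under the distributional pairing (straightforward, since the dependence on $y$ sits entirely in the smooth translated test function), verify the sign-bookkeeping in the identity $\partial_{y_l}\tau_{ky}f=k\,\tau_{ky}\partial_{x_l}f$, and keep careful track of how each $y$-differentiation trades one derivative of the symbol $q$ for one more spatial derivative of $f$ while cancelling an invertible translation. It is worth noting that the hypothesis $a_0\neq0$ guarantees $q\not\equiv0$ and disposes of the degenerate constant case, whereas it is the nonvanishing of the leading coefficient together with the open-ness of the admissible steps, the same feature that $\Lambda_{G,z_0}$ has nonempty interior in the applications, that actually drives the differentiation argument. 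An alternative route would be to extract a mixed finite-difference identity $\Delta_{h_1}\cdots\Delta_{h_n}f=0$ on a generating set and invoke Theorem \ref{Montel_commt_mixed}, but the differentiation argument above seems more direct.
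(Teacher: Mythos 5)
Your proof is correct, and every step that needs justification checks out: the smoothness of $y\mapsto f\{\tau_{-ky}\phi\}$, the sign bookkeeping in $\partial_{y_l}\tau_{ky}f=k\,\tau_{ky}\partial_{x_l}f$, the cancellation of the invertible operator $\tau_y$, the induction reaching every multi-index of order $r+1$, and the classical fact that a distribution whose order-$n$ partials all vanish is an ordinary polynomial of total degree at most $n-1$; your reduction to the exact degree $m=\deg q$ also correctly disposes of the case $a_n=0$. However, your route is genuinely different from the paper's. The paper stays entirely inside the finite-difference calculus: replacing $y$ by $y-h_1$, applying $\tau_{nh_1}$ and subtracting kills the top coefficient, and after $n$ such eliminations (shrinking the admissible ball to $B_d(0,\delta/2^n)$) one is left with $a_0\Delta_{h_n}\Delta_{2h_{n-1}}\cdots\Delta_{nh_1}f=0$ -- this is where $a_0\neq 0$ is used -- after which the conclusion comes from the Montel-type Theorem \ref{Montel_commt_mixed} combined with Kronecker's theorem, which supplies a generating set of a dense subgroup of $\mathbb{R}^d$ inside any small ball. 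You instead differentiate the relation in the step variable, trading derivatives of the symbol $q$ for spatial derivatives of $f$ and eliminating from the bottom (your engine is the leading coefficient rather than $a_0$, which, as you note, only rules out the degenerate constant case), ending at $\partial^{\alpha}f=0$ for all $|\alpha|=n$. What each approach buys: yours is shorter and self-contained for $X_d$, bypassing Theorem \ref{Montel_commt_mixed} and the Kronecker density argument altogether, with the same degree bound. The paper's proof, being purely combinatorial in the translation structure, transfers to settings where differentiation is unavailable; this matters later in the paper, where Section 6 explicitly reuses ``the arguments used for the proof of Lemma \ref{pre}'' for functions $f:\mathbb{K}^d\to\mathbb{K}$ over a field of characteristic $0$. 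Your differentiation argument could not serve that purpose, though it is perfectly valid for the lemma as stated.
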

\begin{proof}
By assumption, $q(\tau_y)f=0$ for all $y$ with $\|y-z_0\|\leq \delta$, which means that 
\begin{equation*}
0=\sum_{k=0}^na_k(\tau_{ky}f)(x),
\quad\forall y\in B_{d}(z_0,\delta):=\{z\in\mathbb{R}^d:\|z-z_0\|\leq \delta\}.
\end{equation*}
Assume that $y\in B_{d}(z_0,\delta/2),h_1\in  B_{d}(0,\delta/2)$ (so that $y^*=y-h_1\in B_d(z_0,\delta)$). 
Then:
\begin{equation*} 
0=\sum_{k=0}^na_k\tau_{ky^*}f(x)
 =\sum_{k=0}^na_k\tau_{ky-kh_1}f(x),
\quad\forall y\in B_{d}(z_0,\delta/2),\forall h_1\in B_{d}(0,\delta/2).
\end{equation*}
Apply $\tau_{nh_1}$ to both sides of the equation. 
Then, for all $y\in B_{d}(z_0,\delta/2)$ and $h_1\in B_{d}(0,\delta/2)$, 
\begin{equation*}
0=\sum_{k=0}^na_k\tau_{nh_1}\tau_{ky-kh_1}f(x)
 =\sum_{k=0}^{n}a_k\tau_{(n-k)h_1}(\tau_{ky}f)(x).
\end{equation*}
Taking differences with $0=\sum_{k=0}^na_k(\tau_{ky}f)(x)$, we obtain  
\begin{equation*}
0=\sum_{k=0}^{n-1}a_k\Delta_{(n-k)h_1}(\tau_{ky}f)(x) 
 =\sum_{k=0}^{n-1}a_k\tau_{ky} (\Delta_{(n-k)h_1} f)(x),
\quad\forall y\in B_{d}(z_0,\delta/2),\forall h_1\in B_{d}(0,\delta/2).
\end{equation*}
We can repeat the argument, reducing the norm of $y-z_0,h_1$ to $\delta/4$, and, then, to $\delta/8$, etc., which leads, after $n$ iterations of the argument, to the equation
\[
a_0\Delta_{h_n} \Delta_{2h_{n-1}} \cdots \Delta_{(n-1)h_2} \Delta_{nh_1}(f)(x)=0,\quad\forall h_1,\cdots,h_n\in B_{d}(0,\delta/2^n).
\] 
In particular, we have 
\[
\Delta_{h_n} \Delta_{h_{n-1}} \cdots \Delta_{h_2} \Delta_{h_1}(f)(x) 
=\Delta_{h_n} \Delta_{2\frac{h_{n-1}}{2}} \cdots \Delta_{(n-1)\frac{h_2}{n-1}} \Delta_{n\frac{h_1}{n}}(f)(x) 
=0
\] 
whenever $h_1,\cdots,h_n\in B_{d}(0,\delta/2^n)$, 
since $\|h\|\leq \delta/2^n$ implies $\|h/k\|\leq \delta/2^n$ for $k=1,2,\cdots,n$. 
\par
The result follows from Theorem \ref{Montel_commt_mixed} since every Euclidean ball $B_{d}(0,\varepsilon)$ of positive radius contains a generating set of a dense subgroup of $\mathbb{R}^d$ (a simple way to construct such a set is taking into account Kronecker's theorem \cite[Theorem 442, page 382]{HW}, 
which implies that the set $E=\{e_1,\cdots,e_d,(\theta_1,\cdots,\theta_d)\}$ generates a dense subgroup of $\mathbb{R}^d$ as long as $\{1,\theta_1,\cdots,\theta_d\}$ is a $\mathbb{Q}$-linear independent set of real numbers. 
Obviously, the same property will hold for the sets $E_n=\frac{1}{n}E$, $n\in\mathbb{N}$, and clearly $E_n\subseteq B_d(0,\varepsilon)$ if $n$ is large enough).  
\end{proof}
We also use the following well-known result \cite{Fro,RR}: 
\begin{theorem}[Frobenius, 1896]
Let $\mathbb{K}=\overline{\mathbb{K}}$ be an algebraically closed field, and $V$ be a finite-dimensional $\mathbb{K}$-vector space. 
Assume that $T,S:V\to V$ are commuting linear operators (i.e., $TS=ST$). 
Then they are simultaneously triangularizable on $V$. 
\end{theorem}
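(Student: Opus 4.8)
The plan is to prove the statement by induction on $n=\dim_{\mathbb{K}}V$, with the whole argument hinging on the existence of a \emph{common eigenvector} of $T$ and $S$. The base case $n=1$ is immediate: every linear operator on a one-dimensional space is scalar multiplication, so both $T$ and $S$ are trivially triangular in any basis.

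For the inductive step I would first manufacture a common eigenvector. Since $\mathbb{K}=\overline{\mathbb{K}}$ is algebraically closed, the characteristic polynomial of $T$ has a root $\lambda\in\mathbb{K}$, so the eigenspace $W=\ker(T-\lambda\,\mathrm{id})$ is a nonzero subspace of $V$. The commutation relation $TS=ST$ forces $S$ to preserve $W$: if $Tw=\lambda w$, then $T(Sw)=S(Tw)=\lambda(Sw)$, so $Sw\in W$. Hence $S$ restricts to a linear operator on the nonzero space $W$, and, again by algebraic closedness, this restriction has an eigenvector $v\in W\setminus\{0\}$. By construction $v$ is simultaneously an eigenvector of $T$ (because $v\in W$) and of $S$, which is exactly the common eigenvector I need.

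Next I would pass to the quotient. The line $L=\mathbb{K}v$ is invariant under both $T$ and $S$, so both operators descend to well-defined commuting operators $\bar T,\bar S$ on $\bar V=V/L$, a space of dimension $n-1$. The induction hypothesis produces a basis $\bar v_2,\dots,\bar v_n$ of $\bar V$ in which both $\bar T$ and $\bar S$ are upper triangular. Lifting these classes to representatives $v_2,\dots,v_n\in V$ and setting $v_1=v$ yields a basis $v_1,\dots,v_n$ of $V$. In this basis the upper-triangularity of $\bar T$ and $\bar S$ on $\bar V$, together with the fact that $L$ is sent into itself by both operators, guarantees that $T$ and $S$ are \emph{simultaneously} upper triangular, completing the induction.

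The one delicate point, and the step I would treat as the crux, is the production of the common eigenvector: it is precisely here that commutativity (giving the invariance $S(W)\subseteq W$) and algebraic closedness (giving an eigenvalue of $T$, and then of the restriction $S|_W$) must be combined; without either hypothesis the argument collapses. Everything that follows is a routine flag-lifting induction through the quotient, standard once a common invariant line has been secured.
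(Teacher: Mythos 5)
Your proof is correct, but note that the paper itself offers no proof of this statement: it is quoted as a classical, well-known result, with the argument deferred to the cited references (Frobenius's 1896 paper and Radjavi--Rosenthal's book on simultaneous triangularization); the paper only proves the corollary that follows it. Your argument is the standard one found in such references: algebraic closedness gives an eigenvalue $\lambda$ of $T$, commutativity gives $S\bigl(\ker(T-\lambda\,\mathrm{id})\bigr)\subseteq \ker(T-\lambda\,\mathrm{id})$, algebraic closedness again gives an eigenvector of $S$ restricted to that kernel (hence a common eigenvector $v$), and then both operators descend to commuting operators on $V/\mathbb{K}v$, where the induction hypothesis applies; lifting a triangularizing basis of the quotient over the invariant line $\mathbb{K}v$ produces a simultaneously triangularizing basis of $V$. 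Every step checks out, including the two points that are easy to gloss over: that $\bar T$ and $\bar S$ are well defined and still commute on the quotient, and that lifted vectors together with $v$ form a basis in which triangularity survives. So your proposal correctly supplies the proof that the paper omits.
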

\begin{corollary}
\label{coFro}
Let $\mathbb{K}$ be any field and $V$ be a $\mathbb{K}$-vector space of finite dimension $n=\dim_{\mathbb{K}}(V)$.  
Assume that $T,S:V\to V$ are commuting linear operators. 
Then the eigenvalues $\lambda_i(T), \lambda_i(S)$ and $\lambda_i(TS)$ of $T,S$ and $TS$ (which belong to the algebraic closure $\overline{\mathbb{K}}$ of $\mathbb{K}$), respectively, can be arranged, counting multiplicities, so that $\lambda_i(TS)=\lambda_i(T)\lambda_i(S)$ for $i=1,\cdots,n$.  
In particular, the characteristic polynomials 
$p(z)=\prod_{i=1}^{n}(z-\lambda_i(T))$, 
$q(z)=\prod_{i=1}^{n}(z-\lambda_i(S))$, and $h(z)=\prod_{i=1}^{n}(z-\lambda_i(T)\lambda_i(S))$, 
of $T,S$, and $TS$, respectively, are polynomials in $\mathbb{K}[z]$. 
\end{corollary}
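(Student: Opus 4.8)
The plan is to reduce the statement to Frobenius's theorem by extending scalars to the algebraic closure $\overline{\mathbb{K}}$. First I would fix a basis of $V$, thereby representing $T$ and $S$ as $n\times n$ matrices with entries in $\mathbb{K}\subseteq\overline{\mathbb{K}}$. Viewing these same matrices as linear operators on the $\overline{\mathbb{K}}$-vector space $\overline{\mathbb{K}}^n$, they continue to commute, and their characteristic polynomials (computed as $\det(zI-T)$, $\det(zI-S)$, and $\det(zI-TS)$) are unaffected by this extension. Consequently the eigenvalues $\lambda_i(T),\lambda_i(S),\lambda_i(TS)\in\overline{\mathbb{K}}$ are exactly the roots, counted with multiplicity, of these three characteristic polynomials.

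Next I would invoke Frobenius's theorem on $\overline{\mathbb{K}}^n$: since $T$ and $S$ commute, there is an invertible $U\in\GL(n,\overline{\mathbb{K}})$ such that $U^{-1}TU$ and $U^{-1}SU$ are both upper triangular. Choosing the ordering of the basis appropriately, the diagonal entries of $U^{-1}TU$ are $\lambda_1(T),\dots,\lambda_n(T)$ and those of $U^{-1}SU$ are $\lambda_1(S),\dots,\lambda_n(S)$, each eigenvalue appearing according to its multiplicity, since the diagonal of a triangular matrix carries precisely its characteristic polynomial.

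The key computation is then immediate: $U^{-1}(TS)U=(U^{-1}TU)(U^{-1}SU)$ is a product of two upper triangular matrices, hence upper triangular, and its $i$-th diagonal entry is the product $\lambda_i(T)\lambda_i(S)$ of the corresponding diagonal entries. Since the diagonal of this triangular matrix lists the eigenvalues of $TS$ with multiplicity, we obtain the desired arrangement $\lambda_i(TS)=\lambda_i(T)\lambda_i(S)$ for $i=1,\dots,n$, together with the factorization $h(z)=\prod_{i=1}^n\bigl(z-\lambda_i(T)\lambda_i(S)\bigr)=\det(zI-TS)$.

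Finally, for the \emph{in particular} clause, I would observe that $p(z)=\det(zI-T)$, $q(z)=\det(zI-S)$, and $h(z)=\det(zI-TS)$ have coefficients obtained by polynomial expressions in the entries of $T$, $S$, and $TS$, all of which lie in $\mathbb{K}$; hence $p,q,h\in\mathbb{K}[z]$, even though their factorizations over $\overline{\mathbb{K}}$ need not split over $\mathbb{K}$ itself. The main (indeed, essentially the only) obstacle is the scalar-extension bookkeeping: one must verify that commutativity and the characteristic polynomials survive the passage from $\mathbb{K}$ to $\overline{\mathbb{K}}$, after which the product-of-triangular-matrices argument does all the work.
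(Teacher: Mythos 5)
Your proposal is correct and follows essentially the same route as the paper's proof: fix a basis, view the commuting matrices as operators on $\overline{\mathbb{K}}^n$, apply Frobenius's theorem to triangularize simultaneously, and read the eigenvalue arrangement $\lambda_i(TS)=\lambda_i(T)\lambda_i(S)$ off the diagonals of the product of triangular matrices. You merely spell out two details the paper leaves implicit (the diagonal-product computation and the fact that the characteristic polynomials have coefficients in $\mathbb{K}$), which is fine but not a different argument.
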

\begin{proof}
Let $n=\dim_{\mathbb{K}}(V)$ and fix a basis of $V$. 
Consider the matrices $A,B$ associated to $T,S$, respectively, with respect to this basis. 
Then $TS=ST$ implies that $AB=BA$, and we consider the operators $\overline{T},\overline{S}:\overline{\mathbb{K}}^n\to\overline{\mathbb{K}}^n$ given by $\overline{T}(x)=Ax$, $\overline{S}(x)=Bx$. 
We apply the Frobenius theorem to these operators to claim that the matrices $A,B$ are simultaneously triangularizable on $\overline{\mathbb{K}}$. 
In particular, the eigenvalues $\lambda_i(A),\lambda_i(B),\lambda_i(AB)$ of $A,B,AB$ can be arranged, counting multiplicities, in such a way that 
$\lambda_i(AB)=\lambda_i(A)\lambda_i(B)$ for $i=1,\cdots,n$. This ends the proof. 
\end{proof}
Knowing for what $G\subseteq\GL(d,\mathbb{R})$ and $z_0\in\mathbb{R}^d$, the set $\Lambda_{G,z_0}=Gz_0-Gz_0$ has a nonempty interior is crucial in our argument. 
We summarize some basic results in the following lemma. 
\begin{lemma}
\label{lemma_general}
Let $G$ be a subgroup of $\GL(d,\mathbb{R})$ and $z_0$ be a point in $\mathbb{R}^d$. 
\begin{enumerate} 
\item If the set $\Lambda_{G,z_0}$ has a nonempty interior, then $z_0\neq0$.
\item If $-I_d\in G$, where $I_d$ is the identity matrix, then $\Lambda_{G,z_0}$ is also the Minkowski sum $Gz_0+Gz_0$ of the orbit $Gz_0$ with itself. 
\item The set $\Lambda_{G,z_0}$ is invariant by replacing  $z_0$ by any other point in the orbit $Gz_0$. 
\item If $G=\GL(d,\mathbb{R})$ and $z_0\ne0$, then $Gz_0=\mathbb{R}^d\setminus\{0\}$ and $\Lambda_{G,z_0}=\mathbb{R}^d$. 
\item The set $\Lambda_{G,z_0}$ can be written as 
\begin{equation}
\label{decomposition}
\Lambda_{G,z_0}=G(Gz_0-z_0)=\{P(Q-I)z_0:P,Q\in G\}.
\end{equation}
\end{enumerate}
\end{lemma}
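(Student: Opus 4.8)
The plan is to dispatch the five items in turn, each reducing to an elementary manipulation of the left action $(P,x)\mapsto Px$ of $G$ on $\mathbb{R}^d$ together with the definition $\Lambda_{G,z_0}=\{Pz_0-Qz_0:P,Q\in G\}$. For (i) I would argue by contraposition: if $z_0=0$, then $Pz_0=0$ for every $P\in G$, so the orbit $Gz_0=\{0\}$ collapses and $\Lambda_{G,z_0}=\{0\}$, which has empty interior in $\mathbb{R}^d$; hence a nonempty interior forces $z_0\neq 0$. For (ii) the key observation is that $-I_d\in G$ makes the orbit symmetric: since $G$ is a group, $(-I_d)Q\in G$ for every $Q\in G$, so $-Qz_0\in Gz_0$, giving $-Gz_0=Gz_0$; substituting $-b$ for $b$ in the generic difference $a-b$ then identifies $Gz_0-Gz_0$ with $Gz_0+Gz_0$. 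For (iii), if $z_1=Rz_0$ with $R\in G$, the identity $GR=G$ yields $Gz_1=GRz_0=Gz_0$, whence $\Lambda_{G,z_1}=\Lambda_{G,z_0}$.

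For (iv) I would invoke the transitivity of the $\GL(d,\mathbb{R})$-action on $\mathbb{R}^d\setminus\{0\}$: any nonzero vector can be carried to any other by an invertible matrix, while invertibility of $P$ precludes $Pz_0=0$ when $z_0\neq 0$; thus $Gz_0=\mathbb{R}^d\setminus\{0\}$. Writing an arbitrary $v\neq 0$ as $2v-v$ (with both terms nonzero) and $0$ as $a-a$ for any $a\neq 0$ then shows $\Lambda_{G,z_0}=\mathbb{R}^d$. For (v) I would start from the identity $P(Q-I)z_0=(PQ)z_0-Pz_0$ and combine it with the fact that $(P,Q)\mapsto(PQ,P)$ is a bijection of $G\times G$ onto itself, its inverse being $(R,P)\mapsto(P,P^{-1}R)$. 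Consequently, as $(P,Q)$ runs over $G\times G$, the pair $(PQ,P)$ runs over all of $G\times G$, and therefore $\{P(Q-I)z_0:P,Q\in G\}=\{Rz_0-Pz_0:R,P\in G\}=\Lambda_{G,z_0}$, which is exactly the asserted decomposition $G(Gz_0-z_0)$.

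None of these steps presents a genuine difficulty, as each rests only on the group axioms and the linearity of the action. The one point deserving care is the reindexing used in (v): I would explicitly verify that letting $(P,Q)$ range over $G\times G$ causes $(PQ,P)$ to sweep out \emph{every} ordered pair of elements of $G$, so that no differences $Rz_0-Pz_0$ are either omitted from or spuriously added to the set. Once this bijection is in hand, the remaining identities are immediate.
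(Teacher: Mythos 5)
Your proof is correct and follows essentially the same route as the paper's: contraposition via $G0=\{0\}$ for (i), the symmetry $-Gz_0=G(-I_d)z_0=Gz_0$ for (ii), $Gz_1=GPz_0=Gz_0$ for (iii), transitivity of the $\GL(d,\mathbb{R})$-action for (iv), and the identity $P(Q-I)z_0=(PQ)z_0-Pz_0$ for (v). Your explicit reindexing bijection $(P,Q)\mapsto(PQ,P)$ in (v) is simply a carefully spelled-out version of the paper's terse observation that the multiplication map $G\times G\to G$ is surjective, so no substantive difference remains.
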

\begin{proof}
(i) We have $G0=\{0\}$ and thus $\Lambda_{G,0}=\{0\}$ for all $G$.
\par
(ii) We have $-Gz_0=G(-z_0)=G(-I_d)z_0=Gz_0$. 
\par
(iii) If $z_1\in Gz_0$, that is, $z_1=Pz_0$ for some $P\in G$, then $Gz_1=GPz_0=Gz_0$ and thus 
\[
\Lambda_{G,z_1}=Gz_1-Gz_1=Gz_0-Gz_0=\Lambda_{G,z_0}. 
\]
\par
(iv) If $z_0\ne0$ and $z_1\ne0$, then there is a $P\in\GL(d,\mathbb{R})$ such that $z_1=Pz_0$. 
(This follows from a simple fact in linear algebra: 
if $\{u_i\}_{i\in I}$ is a basis of a vector space $U$ and $\{v_i\}_{i\in I}$ is any set of vectors in a vector space $V$, then there is a unique linear map $f:U\to V$ such that $v_i=f(u_i)$ for all $i\in I$.) 
Hence, we have $Gz_0=\mathbb{R}^d\setminus\{0\}$ and $\Lambda_{G,z_0}=\mathbb{R}^d$. 
\par
(v) This is because the multiplication map $G\times G\to G;\ (P,Q)\mapsto PQ$ is subjective. 
\end{proof}
One might guess that $\Lambda_{G,z}$ has a nonempty interior for all $z\ne0$ if it has a nonempty interior for some $z\ne0$.
However, this is not always true, unless that $G$ is a normal subgroup of $\GL(d,\mathbb{R})$. 
\begin{lemma}
\label{normal} 
If $G$ is a normal subgroup of $\GL(d,\mathbb{R})$, then 
$\Lambda_{G,z}$ has a nonempty interior for all $z\ne0$, provided that $\Lambda_{G,z_0}$ has a nonempty interior for some $z_0\ne0$.
\end{lemma}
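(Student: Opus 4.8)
The plan is to reduce $\Lambda_{G,z}$ to $\Lambda_{G,z_0}$ by a single linear isomorphism and then invoke the fact that invertible linear maps are homeomorphisms of $\mathbb{R}^d$, hence preserve the property of having nonempty interior. First, given any $z\ne0$, I would use the linear-algebra fact already recalled in the proof of Lemma \ref{lemma_general}(iv): since both $z_0$ and $z$ are nonzero, there is some $P\in\GL(d,\mathbb{R})$ with $z=Pz_0$.

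The step where normality actually enters is the rewriting of the orbit $Gz$. Normality of $G$ in $\GL(d,\mathbb{R})$ means $PGP^{-1}=G$ for every $P\in\GL(d,\mathbb{R})$; multiplying this set identity on the right by $P$ yields $GP=PG$. Consequently the whole orbit of $z$ is the $P$-image of the orbit of $z_0$, namely
\[
Gz=GPz_0=PGz_0=P(Gz_0).
\]
Since the Minkowski difference commutes with the linear action of $P$, I would then conclude
\[
\Lambda_{G,z}=Gz-Gz=P(Gz_0)-P(Gz_0)=P(Gz_0-Gz_0)=P\,\Lambda_{G,z_0}.
\]
As $P$ is an invertible linear map, it is a homeomorphism of $\mathbb{R}^d$, so it carries sets with nonempty interior to sets with nonempty interior. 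Because $\Lambda_{G,z_0}$ has nonempty interior by hypothesis, so does $\Lambda_{G,z}=P\,\Lambda_{G,z_0}$, and this holds for every $z\ne0$.

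I do not expect a serious obstacle here: the entire content of the argument is the set identity $GP=PG$, which is exactly the meaning of normality and is precisely what fails for a general subgroup. Without it one obtains only $Gz=GPz_0$, which need not coincide with $P(Gz_0)$, so the clean conjugation $\Lambda_{G,z}=P\,\Lambda_{G,z_0}$ collapses—this is consistent with the remark preceding the lemma that the hypothesis of normality is genuinely needed for the conclusion to hold for \emph{all} $z\ne0$.
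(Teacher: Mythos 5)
Your proposal is correct and follows essentially the same route as the paper's own proof: both pick $P\in\GL(d,\mathbb{R})$ with $z=Pz_0$, use normality (in the paper via $GP z_0 = PP^{-1}GP z_0 = PGz_0$, in yours via the equivalent identity $GP=PG$) to get $\Lambda_{G,z}=P\,\Lambda_{G,z_0}$, and conclude because the invertible linear map $P$ is open. No gaps.
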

\begin{proof}
For any $z_1\neq0$, let $P\in\GL(d,\mathbb{R})$ be such that $z_1=Pz_0$. 
Since $G$ is a normal subgroup of $\GL(d,\mathbb{R})$, we have $Gz_1=GPz_0=PP^{-1}GPz_0=PGz_0$ and thus 
\[
\Lambda_{G,z_1}=Gz_1-Gz_1=P(Gz_0-Gz_0)=P\Lambda_{G,z_0}. 
\]
Since multiplication by an invertible matrix $P$ is an open map on $\mathbb{R}^d$, the image of the nonempty interior of $\Lambda_{G,z_0}$ is open in $\mathbb{R}^d$. 
Therefore, $\Lambda_{G,z_1}$ also has a nonempty interior. 
\end{proof}
\begin{remark}
The normality condition on $G$ in Lemma \ref{normal} cannot be removed. 
For example, we consider the group of diagonal matrices in dimension $2$: 
\[
D_2=\left\{\diag(a,b)= \begin{pmatrix} a & 0\\ 0 & b \end{pmatrix} : ab\ne0\right\},
\]
which is not a normal subgroup in $\GL(2,\mathbb{R})$, since 
\[
\begin{pmatrix}1 & 1\\ 0 & 1 \end{pmatrix}
\begin{pmatrix}1 & 0\\ 0 & 2 \end{pmatrix}
\begin{pmatrix}1 & 1\\ 0 & 1 \end{pmatrix}^{-1}=
\begin{pmatrix}1 & 1\\ 0 & 2 \end{pmatrix}\not\in D_2.
\]
Let $z_0=(1,1)^T$ and $z_1=(1,0)^T$ be two nonzero vectors. 
Then, we have 
\[
D_2z_0=\{(a,b)^T:ab\ne0\}\quad\text{and}\quad D_2z_1=\{(a,0)^T:a\ne0\}. 
\]
Hence, $\Lambda_{D_2,z_0}=\mathbb{R}^2$ has a nonempty interior, but $\Lambda_{D_2,z_1}=\mathbb{R}\times\{0\}$ has an empty interior. 
\end{remark}
\begin{remark}
The orbit-stabilizer theorem explains such a phenomenon. 
For any subgroup $G\subseteq\GL(d,\mathbb{R})$ and a fixed point $z\in\mathbb{R}^d$, we consider the map $\phi:G\to\mathbb{R}^d$ by sending $P$ to $Pz$, whose image is the orbit $Gz$. 
Two elements $g$ and $h$ have the same image if and only if 
they lie in the same left coset for the stabilizer subgroup $G_z=\{g\in G: gz=z\}$. 
Thus, $\phi$ induces a bijection between the set $G/G_z$ of left cosets for the stabilizer subgroup $G_z$ and the orbit $Gz$ sending $PG_z$ to $Pz$. 
Hence, the orbit is ``smaller'', if the stabilizer is ``larger'', i.e., if the point is ``more symmetric''. 
\par
In our situation, for any $P\in\GL(d,\mathbb{R})$, if $z\ne0$, then $Pz\ne0$, but the stabilizer subgroups $G_z$ and $G_{Pz}$ may have different ``sizes'', so do the corresponding orbits $Gz$ and $GPz$. 
But if $G$ is a normal subgroup, then $GPz=PGz$, which has roughly the same size as $Gz$, since the multiplication by $P$ induces an open map on $\mathbb{R}^d$. 
So if one wants $Gz$ to have a nonempty interior, a nonzero point with ``less symmetry'' would be a better choice for the starting point $z$.
\end{remark}
The interior information of the orbit $Gz_0$ is not always the same as that of $\Lambda_{G,z_0}$, as the Minkowski sum or difference often ``fatten'' geometric objects. 
\begin{lemma} \label{fatten} Let $G$ and $z_0$ be as before. 
\begin{enumerate}
\item If the orbit $Gz_0$ has a nonempty interior, then so does $\Lambda_{G,z_0}$.
\item There exits $G$ and $z_0$ such that the orbit $Gz_0$ has an empty interior, but $\Lambda_{G,z_0}$ does not. 
In particular, this happens, for all $d\geq 2$, $z_0\ne0$, and $G=\mathbf{O}(d)$ or $G=\mathbf{SO}(d)$, the orthogonal or special orthogonal group in dimension $d$. 
\end{enumerate}
\end{lemma}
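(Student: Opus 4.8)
The plan is to treat the two parts separately: part (i) is a one-line topological observation, and part (ii) is an explicit computation for the orthogonal groups.

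For part (i), I would use that $I_d\in G$, so $z_0\in Gz_0$ and the translate $Gz_0-z_0$ is contained in $\Lambda_{G,z_0}=Gz_0-Gz_0$. Since translation by $-z_0$ is a homeomorphism of $\mathbb{R}^d$, the set $Gz_0-z_0$ has a nonempty interior whenever $Gz_0$ does; hence so does $\Lambda_{G,z_0}$. This direction is immediate.

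For part (ii), I would take $G=\mathbf{O}(d)$ or $G=\mathbf{SO}(d)$ with $d\geq2$ and any $z_0\neq0$, and set $r=\|z_0\|>0$. First I would identify the orbit: both groups act transitively on the sphere $S_r=\{x\in\mathbb{R}^d:\|x\|=r\}$ (for $\mathbf{SO}(d)$ this transitivity on $S^{d-1}$ requires $d\geq2$), so $Gz_0=S_r$, a $(d-1)$-sphere, which has empty interior in $\mathbb{R}^d$. It then remains to show that $\Lambda_{G,z_0}=S_r-S_r=\{a-b:\|a\|=\|b\|=r\}$ equals the closed ball $B_d(0,2r)$, which plainly has nonempty interior.

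The inclusion $\Lambda_{G,z_0}\subseteq B_d(0,2r)$ is the triangle inequality $\|a-b\|\leq\|a\|+\|b\|=2r$. For the reverse inclusion, given $v$ with $\|v\|\leq 2r$, I would look for $a=\tfrac{v}{2}+w$ and $b=-\tfrac{v}{2}+w$ with $w\perp v$: then $a-b=v$, and $w\perp v$ forces $\|a\|=\|b\|$, with common value $r$ exactly when $\|w\|^2=r^2-\tfrac14\|v\|^2\geq0$. Producing such an orthogonal $w$ of the prescribed length is possible because, for $d\geq2$, the orthogonal complement of $v$ is nontrivial. This yields $\Lambda_{G,z_0}=B_d(0,2r)$, completing the example. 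The only subtle step — and the one where the hypothesis $d\geq2$ is genuinely used — is precisely this reverse inclusion: one must find a vector orthogonal to the prescribed difference $v$, which fails in dimension one and is exactly what makes the Minkowski difference of the sphere fatten into a solid ball.
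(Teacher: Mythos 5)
Your proposal is correct, and part (i) matches the paper's argument in substance (the paper writes $\Lambda_{G,z_0}$ as a union of translates $Gz_0+z$, $z\in -Gz_0$, each with nonempty interior; you use the single translate $Gz_0-z_0$ — same one-line observation). For part (ii), however, you take a genuinely different route. The paper does not compute the Minkowski difference $S_r-S_r$ directly: it invokes the decomposition $\Lambda_{G,z_0}=G(Gz_0-z_0)$ from Lemma \ref{lemma_general}(v) (equation \eqref{decomposition}), notes that the translated sphere $Gz_0-z_0$ is a sphere through the origin and hence contains vectors of every length in $[0,2r]$, and then uses the transitivity of $\mathbf{O}(d)$ (or $\mathbf{SO}(d)$, $d\geq 2$) on each sphere of fixed radius to sweep these vectors into the full ball $B_d(0,2r)$. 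Your argument instead exhibits, for each $v$ with $\|v\|\leq 2r$, an explicit pair $a=\tfrac{v}{2}+w$, $b=-\tfrac{v}{2}+w$ with $w\perp v$ and $\|w\|^2=r^2-\tfrac14\|v\|^2$, so that $a,b\in S_r$ and $a-b=v$. What each approach buys: the paper's proof is shorter given the machinery already in place and isolates the structural reason the ``fattening'' occurs (act by the group on a set realizing all lengths), which is the same mechanism exploited elsewhere in the paper; yours is self-contained, avoids the decomposition lemma entirely, correctly pinpoints where $d\geq 2$ enters (nontriviality of $v^\perp$), and proves the sharper identity $\Lambda_{G,z_0}=B_d(0,2r)$ rather than mere containment of a ball. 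Both are valid proofs of the lemma.
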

\begin{proof}
(i) Each translation $Gz_0+z$ has a nonempty interior, and so does 
\[
\Lambda_{G,z_0}=\bigcup_{z\in -Gz_0} (Gz_0+z).
\]
\par
(ii) Take $G=\mathbf{O}(d)$ or $G=\mathbf{SO}(d)$. 
For any $z_0\ne0$, let $r=\|z_0\|>0$ be its length.  
The orbit $Gz_0$ (resp. its translation $Gz_0-z_0$) is the sphere with center $0$ (resp. $-z_0$) and radius $r$ in $\mathbb{R}^d$, so it has an empty interior. 
Note that $Gz_0-z_0$ contains at least one vector whose length is any given number in the interval $[0,2r]$. 
Therefore, by \eqref{decomposition}, $\Lambda_{G,z_0}=G(Gz_0-z_0)$ contains a ball of center $0$ and radius $2r$, which has a nonempty interior. 
\end{proof}
\begin{example} 
\label{SO2}
For a concrete example, consider the special orthogonal group in dimension $2$: 
\[
G=\mathbf{SO}(2)=\left\{\begin{pmatrix}
\cos\alpha & -\sin\alpha\\ 
\sin\alpha & \sin\alpha \end{pmatrix}:\alpha\in[0,2\pi)\right\}.
\]
Take $z_0=(1,0)^T$.  
The orbit $Gz_0$ is the unit circle $S^1=\{(\cos\alpha,\sin\alpha)^T:\alpha\in [0,2\pi)\}$ in $\mathbb{R}^2$ and $\Lambda_{G,z_0}$ is the Minkowski sum of $S^1$ with itself, as $-I_2\in G$. 
For any $\alpha,\beta\in[0,2\pi)$, we have 
\[
(\cos\alpha+\cos\beta,\sin\alpha+\sin\beta)^T=2\cos\frac{\alpha-\beta}{2}\left(\cos\frac{\alpha+\beta}{2},\sin\frac{\alpha+\beta}{2}\right)^T,
\]
which parametrizes the ball with center $0$ and radius $2$ in $\mathbb{R}^2$. 
\end{example}
\begin{proof}[Proof of Theorem \ref{mainT}]
Note that the operators $\tau_y: V\to V$ are all injective and thus surjective, therefore, the eigenvalues of $\tau_y$ are all nonzero. 
They are also commutative, i.e.,  
\begin{equation} 
\label{traslation}
\tau_{y+z}=\tau_y\tau_z=\tau_z\tau_y,\quad\forall y,z\in \mathbb{R}^d,
\end{equation}
and the operators $\tau_{Py}$ and $\tau_y$ are similar to each other, i.e., 
\begin{equation} 
\label{similarity}
\tau_{Py}=(O_{P})^{-1}\tau_y O_{P}=O_{P^{-1}}\tau_y O_{P},\quad\forall P\in\GL(d,\mathbb{R}),\quad\forall y\in\mathbb{R}^d. 
\end{equation}
It follows that  
\begin{itemize}
\item[(a)] For any $P\in\GL(d,\mathbb{R})$ and  $y\in\mathbb{R}^d$, the operators $\tau_{y}:V\to V$ and $\tau_{Py}:V\to V$ have the very same eigenvalues and characteristic polynomial. 
\item[(b)] A direct application of Corollary \ref{coFro} guarantees that for any $y,z\in\mathbb{R}^d$, 
we can arrange the eigenvalues $\lambda_i(\tau_y),\lambda_i(\tau_z),\lambda_i(\tau_{y+z})$ of  $\tau_{y},\tau_z,\tau_{y+z}$, respectively,  in such a way that
\[
\lambda_i(\tau_{y+z})=\lambda_i(\tau_y)\lambda_i(\tau_z),\quad i=1,\cdots,N:=\dim V.
\]
\end{itemize}
\par
Now let $P\in G$ and $y\in\mathbb{R}^d$. 
From (a) we have that, for a certain permutation $\sigma$ of $\{1,\cdots, N\}$ 
(which depends on $P\in G$ and the enumeration of the eigenvalues of $\tau_y$),  
\[
\lambda_i(\tau_{Py})=\lambda_{\sigma(i)}(\tau_y),\quad i=1,\cdots,N.
\]
Moreover, if we set $z=(P-I)y$, then $Py=y+(P-I)y$ and $(b)$ imply that:
\[
\lambda_{\sigma(i)}(\tau_y)=\lambda_i(\tau_{Py})=\lambda_i(\tau_y)\lambda_i(\tau_{(P-I)y}),\quad i=1,\cdots,N.
\]
Hence, we have 
\begin{equation}
\label{autova}
\lambda_i(\tau_{(P-I)y})=\frac{\lambda_{\sigma(i)}(\tau_y)}{\lambda_i(\tau_y)}, \quad i=1,\cdots,N. 
\end{equation}
\par
Let $z_0\in\mathbb{R}^d$ be such that $\Lambda_{G,z_0}$ has a nonempty interior. 
Fix the enumeration of the eigenvalues of $\tau_{z_0}$. 
Consider the polynomial 
\begin{equation}
\label{pol_q}
q(z)=\prod_{\sigma\in S_N^*}\prod_{i=1}^N(z- \frac{\lambda_{\sigma(i)}(\tau_{z_0})}{\lambda_i(\tau_{z_0})}),
\end{equation}
where $S_N^*$ denotes the set of permutations of $\{1,\cdots, N\}$ that 
appear in \eqref{autova} for some $P\in G$.  
Then $\prod_{i=1}^N(z-\frac{\lambda_{\sigma(i)}(\tau_{z_0})}{\lambda_i(\tau_{z_0})})$ has all its coefficients in $\mathbb{R}$ for each permutation $\sigma\in S_N^*$ and consequently all the coefficients of the polynomial $q$ defined by \eqref{pol_q} are real numbers. 
(Obviously, it may happen that $S_N^*=S_N$.)
\par
Thus $q(z)=a_0+a_1z+\cdots a_mz^m$ for certain coefficients $a_k\in\mathbb{R}$, with $m\leq N\cdot N!$, and $a_0\neq 0$ because all eigenvalues $\lambda_i(\tau_{z_0})$ are different from $0$ (since $\tau_{z_0}$ is injective). 
Moreover, $q(z)$  is a multiple of the characteristic polynomial of the operator $\tau_{(P-I)z_0}:V\to V$, for every $P\in G$, 
and the same holds for the operators $\tau_{Q(P-I)z_0}:V\to V$, for every $P,Q\in G$, 
since $\tau_{Qw}:V\to V$ and $\tau_{w}:V\to V$ have the same characteristic polynomial for all $Q\in G$ and all $w\in\mathbb{R}^d$. 
Thus, equation \eqref{decomposition} implies that $q(z)$ is a multiple of the characteristic polynomial of $\tau_z:V\to V$ for every $z\in\Lambda_{G,z_0}$. 
Now Cayley-Hamilton's theorem implies that $q(\tau_{z})=0$ for all $z\in\Lambda_{G,z_0}$, so that we can apply Lemma \ref{pre} to conclude that $f$ is, in distributional sense, an ordinary polynomial of total degree at most $m-1$. 
In particular, if $f\in C(\mathbb{R}^d)$, then $f$ is an ordinary polynomial in $d$ real variables of total degree at most $m-1$.    
\end{proof}
\begin{remark}
Assume that $V\subseteq X_d$ satisfies the hypotheses of Theorem \ref{mainT} and $f\in V$. 
Then $R_G(f)\subseteq V$ so that we can apply Remark \ref{primera} to get a better estimation:
\[
\sqrt[d]{\dim R_G(f)}-1\leq {\rm fdeg}(f)\leq \dim R_G(f)-1\leq \dim V-1.
\] 
\end{remark}
\par
Clearly, item (ii) of Lemma \ref{fatten} implies that Theorem \ref{mainT} can be applied to $G=\mathbf{SO}(d)$ and $G=\mathbf{O}(d)$ for any $d\geq 2$. 
Concretely, the following theorem holds true:
\begin{theorem}
\label{On}
Assume that $d\geq 2$ and $G=\mathbf{SO}(d)$ or $G=\mathbf{O}(d)$. 
If $f$ belongs to a finite-dimensional vector space $V\subset X_d$ such that $V$ is invariant by translations and elements of $G$,
then $f$ is an ordinary polynomial in $d$ real variables. 
\end{theorem}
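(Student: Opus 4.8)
The plan is to recognize Theorem \ref{On} as a direct specialization of Theorem \ref{mainT} to the two groups $G=\mathbf{SO}(d)$ and $G=\mathbf{O}(d)$. Theorem \ref{mainT} already delivers the desired conclusion—that every element of a finite-dimensional, translation-invariant, $G$-invariant subspace $V\subseteq X_d$ is an ordinary polynomial—provided one can verify the single geometric hypothesis that $\Lambda_{G,z_0}$ has a nonempty interior for some $z_0\in\mathbb{R}^d$. Since the hypotheses on $V$ (finite-dimensionality, $\tau_h(V)\subseteq V$ for all $h\in\mathbb{R}^d$, and $O_P(V)\subseteq V$ for all $P\in G$) are given outright in the statement, the entire task reduces to establishing this interior condition for the orthogonal and special orthogonal groups.

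First I would invoke Lemma \ref{fatten}(ii), which asserts precisely that for all $d\geq 2$, any $z_0\neq 0$, and $G=\mathbf{O}(d)$ or $G=\mathbf{SO}(d)$, the orbit $Gz_0$ is a sphere of radius $r=\|z_0\|$ and hence has empty interior, while $\Lambda_{G,z_0}$ nonetheless contains the ball of center $0$ and radius $2r$. The geometric mechanism behind this fattening is encoded in the decomposition \eqref{decomposition}: one writes $\Lambda_{G,z_0}=G(Gz_0-z_0)$, observes that the translated sphere $Gz_0-z_0$ contains a vector of every length in $[0,2r]$, and then applies all of $G$ to these vectors to sweep out the full ball. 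With the interior condition thus secured, I would apply Theorem \ref{mainT} to conclude that every element of $V$, and in particular $f$, is an ordinary polynomial in $d$ real variables.

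I do not anticipate a serious obstacle, since the substantive content—the eigenvalue and Cayley–Hamilton argument reducing invariance to the Fréchet-type equation of Lemma \ref{pre}—is already packaged inside Theorem \ref{mainT}, and the only geometric input is supplied by Lemma \ref{fatten}(ii). The one point deserving care is the necessity of the restriction $d\geq 2$. In dimension $d=1$ the group $\mathbf{SO}(1)=\{1\}$ is trivial and $\mathbf{O}(1)=\{\pm 1\}$, so the orbit of $z_0$ is a single point or a pair of points and $\Lambda_{G,z_0}$ is a finite set with empty interior, causing the hypothesis of Theorem \ref{mainT} to fail. The fattening argument genuinely requires the rotational freedom available only when $d\geq 2$, which is exactly why the theorem is stated under that assumption.
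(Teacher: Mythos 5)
Your proposal is correct and is exactly the paper's own argument: the paper proves Theorem \ref{On} by noting that Lemma \ref{fatten}(ii) supplies the nonempty-interior hypothesis for $\Lambda_{G,z_0}$ when $G=\mathbf{SO}(d)$ or $\mathbf{O}(d)$ and $d\geq 2$, and then applying Theorem \ref{mainT}. Your additional remark on why $d\geq 2$ is necessary is accurate but not needed for the proof itself.
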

We have demonstrated that if $G$ is large enough 
then the finite-dimensional subspaces of $X_d$ which are invariant by translations and elements of $G$ are formed by ordinary polynomials. 
Now we study the opposite extreme. 
Concretely,  we solve the case when $G$ is finite. 
\par
If $G=\{I_d\}$, then everything reduces to Anselone-Korevaar's theorem \cite{AK}, which claims that the elements of any finite-dimensional vector space $V\subseteq X_d$ that is invariant by translations are exponential polynomials. 
Now, we can prove the following:
\begin{theorem}
\label{finite}
Let $V\subseteq X_d$ be a vector space of finite dimension and invariant by translations, and let $G\subseteq \GL(d,\mathbb{R})$ be any finite subgroup. 
Then there exists another finite-dimensional vector space $W\subseteq X_d$ such that $V\subseteq W$ and $W$ is invariant by translations and elements of $G$. 
\end{theorem}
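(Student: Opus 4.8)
The plan is to produce $W$ explicitly by symmetrizing $V$ over the finite group $G$. Concretely, I would set
\[
W=\sum_{P\in G}O_P(V),
\]
the (finite) sum inside $X_d$ of the images of $V$ under all the operators $O_P$, $P\in G$. Since $G$ is finite and each $O_P(V)$ is finite-dimensional (as $O_P$ is linear and $\dim V<\infty$), the space $W$ is finite-dimensional. Because $I_d\in G$ and $O_{I_d}$ is the identity, we have $V=O_{I_d}(V)\subseteq W$, so the required inclusion $V\subseteq W$ holds automatically.

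The first thing I would verify is that $W$ is invariant by elements of $G$. The key identity is the composition rule $O_Q\circ O_P=O_{PQ}$, which follows immediately from the definition $O_P(g)(x)=g(Px)$ (and holds verbatim, including the determinant normalizations, in the distributional setting). Then, for any $Q\in G$,
\[
O_Q(W)=\sum_{P\in G}O_Q\bigl(O_P(V)\bigr)=\sum_{P\in G}O_{PQ}(V)=\sum_{R\in G}O_R(V)=W,
\]
where the reindexing $R=PQ$ is legitimate because right multiplication by $Q$ permutes $G$. Hence $O_Q(W)=W\subseteq W$ for every $Q\in G$.

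The heart of the argument is the translation invariance of $W$, and this is the point I expect to require the most care. The relevant tool is the intertwining relation $\tau_h\circ O_P=O_P\circ\tau_{Ph}$ for every $h\in\mathbb{R}^d$ and $P\in\GL(d,\mathbb{R})$, which is a direct unwinding of the definitions (it is essentially a restatement of the similarity relation \eqref{similarity} and likewise survives the determinant normalization on distributions). Combining it with the hypothesis that $V$ is translation invariant, for each fixed $P\in G$ one gets
\[
\tau_h\bigl(O_P(V)\bigr)=O_P\bigl(\tau_{Ph}(V)\bigr)\subseteq O_P(V)\subseteq W,
\]
since $\tau_{Ph}(V)\subseteq V$. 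Summing over $P\in G$ yields $\tau_h(W)\subseteq W$ for all $h\in\mathbb{R}^d$, as desired.

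Thus the only substantive content of the proof is the pair of operator identities $O_Q O_P=O_{PQ}$ and $\tau_h O_P=O_P\tau_{Ph}$: once these are in place, the group structure of $G$ handles $G$-invariance and the translation invariance of $V$ propagates to $W$. I anticipate no genuine obstacle beyond checking these identities carefully in the distributional case, where the $1/|\det P|$ factors must be tracked; however, these factors are multiplicative and cancel exactly, so they do not affect the subspace inclusions.
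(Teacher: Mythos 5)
Your proposal is correct and follows essentially the same route as the paper: your $W=\sum_{P\in G}O_P(V)$ coincides with the paper's $W=\mathrm{span}\{O_{P_j}(g_k)\}$, and both arguments rest on the same two identities, $O_Q\circ O_P=O_{PQ}$ and $\tau_h\circ O_P=O_P\circ\tau_{Ph}$. The only difference is presentational (you work with subspaces and reindexing over the group, the paper with an enumerated basis), so there is nothing to add.
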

\begin{proof}
Let $P_1,\cdots,P_s$ be an enumeration of elements of $G$, where $P_1=I_d\in \GL(d,\mathbb{R})$. 
Set $\underline{s}=\{1,\cdots,s\}$. 
Let $\eta:\underline{s}\times\underline{s}\to\underline{s}$ be a map such that $P_iP_j=P_{\eta(i,j)}$. 
Thus, $\eta$ describes the group law of $G$. 
If $\{g_k: k=1,\ldots,\dim V\}$ is a basis of $V$, 
then 
\[
W={\rm span}\{O_{P_j}(g_k): j\in\underline{s},\ k=1,\ldots,\dim V\}
\]
is a finite-dimensional vector space $W\subseteq X_d$ containing $V=O_{P_1}(V)$. 
Moreover, for any $i,j\in\underline{s}$, $k=1,\ldots,\dim V$, and $y\in\mathbb{R}^d$, 
the following identities 
\[
O_{P_{i}}(O_{P_{j}}(g_{k}))=O_{P_{j}P_{i}}(g_{k})=O_{P_{\eta(j,i)}}(g_{k})\in W
\quad\text{and}\quad 
\tau_y(O_{P_j}(g_k))=O_{P_j}(\tau_{P_jy}(g_k))\in W
\]
imply that $W$ is invariant by translations and elements of $G$, since $\tau_{P_jy}(g_k)\in V$. 
\end{proof}
\begin{remark}
As we have already observed, for each $d\geq 2$, a distribution $f\in X_d$ is an ordinary polynomial if and only if it belongs to a finite-dimensional space of functions $V\subset X_d$ invariant by isometries of $(\mathbb{R}^d,\|\cdot\|_2)$.  
On the other hand, a famous Theorem by Ulam and Mazur (see \cite{UM,V}) claims that, if $(X,\|\cdot\|)$, $(Y,\|\cdot\|)$ are normed spaces and $T:X\to Y$ is a surjective isometry, then $T$ must be an affine map. 
In particular, for every given norm $\|\cdot\|$ on $\mathbb{R}^d$, the isometries $P:(\mathbb{R}^d,\|\cdot\|)\to (\mathbb{R}^d,\|\cdot\|)$ are affine maps, and we may wonder which elements $f\in X_d$ belong to the finite-dimensional spaces $V\subset X_d$ invariant by these isometries. 
Since these isometries are affine maps, every ordinary polynomial has this property, and every distribution with this property is necessarily an exponential polynomial. 
\par
Moreover, a full characterization depends on the chosen norm $\|\cdot\|$. 
For example, it is known \cite{R} that a norm $\|\cdot\|$ defined on $\mathbb{R}^d$ comes from an inner product (i.e., $\|x\|=\langle x,x\rangle^{\frac{1}{2}}$ for an inner product $\langle\cdot,\cdot\rangle$) if and only if the linear isometries $P:(\mathbb{R}^d,\|\cdot\|) \to (\mathbb{R}^d,\|\cdot\|)$ form a transitive group, which means that for each pair $x,y\in\mathbb{R}^d$ of norm-$1$ elements, $\|x\|=\|y\|=1$, there is a linear isometry $P\in \GL(d,\mathbb{R})$ such that $Px=y$. 
In particular, if we denote by $G={\rm Iso}(\mathbb{R}^d,\|\cdot\|)$ the group of linear isometries, then $\Lambda_{G,e_1}$ has a nonempty interior, and we can apply Theorem \ref{mainT} to $G$. 
This means that a result analogous to Theorem \ref{On} holds for $G={\rm Iso}(\mathbb{R}^d,\|\cdot\|)$ when the norm $\|\cdot\|$ is induced by an inner product.   
\par 
Let us consider another example: given $1\leq p\leq +\infty$, $p\neq 2$, it is known that ${\rm Iso}(\mathbb{R}^d,\|\cdot\|_p)$ is the finite subgroup of $\GL(d,\mathbb{R})$ formed by the signed permutation matrices of size $d$ \cite{LS}, which is isomorphic to the hyperoctahedral group (the group of symmetries of the $d$-dimensional hypercube as well as the group of symmetries of the $d$-dimensional cross-polytope) and has size $2^d\cdot d!$.
Consequently, Theorem \ref{finite} implies that a distribution $f\in X_d$ is an exponential polynomial if and only if it belongs to a finite-dimensional space $V\subseteq X_d$ which is invariant by isometries of $(\mathbb{R}^d,\|\cdot\|_p)$, for some (equivalently, for any) $p\in [1,\infty]\setminus\{2\}$. 
\par   
Finally, we should mention that ${\rm Iso}(\mathbb{R}^d,\|\cdot\|)$ is infinite if and only if the numerical index of $(\mathbb{R}^d,\|\cdot\|)$ is zero (see \cite{M,R}), and (for $d\geq 3$) there are finite-dimensional non-Hilbert spaces satisfying this condition (for $d=2$ the only normed space with numerical index zero is the Hilbert space) \cite{MMRP}. 
In these cases, we cannot directly use neither Theorem \ref{mainT} nor Theorem \ref{finite}, and a full characterization of the finite-dimensional spaces $V\subseteq X_d$ invariant by isometries of $(\mathbb{R}^d,\|\cdot\|)$ is still an open problem. 
\end{remark}
\par
We devote the next two sections of the paper to show a few new examples of classical groups $G\subseteq \GL(d,\mathbb{R})$ with the property that if $f$ belongs to a finite-dimensional vector space of continuous functions
$V\subset C(\mathbb{R}^d)$ such that $V$ is invariant by translations and elements of $G$,
then $f$ is an ordinary polynomial in $d$ real variables. 
In Section \ref{Dist}, we prove the results from Sections \ref{SOpq} and \ref{SSp} for distributions. 
The last section of the paper studies a similar problem for functions defined on $\mathbb{K}^d$, where $\mathbb{K}$ is any field of characteristic $0$. 
\section{The general orthogonal group $\mathbf{O}(p,q)$ case} \label{SOpq}
For the proof of the main results in this and next sections, we use the following theorem by Kiss and Laczkovich (see \cite{KL}):
\begin{theorem} 
\label{KLseparatelypol} 
Let $\mathbb{K}$ denote either the field of real numbers, $\mathbb{K}=\mathbb{R}$, 
or the field of complex numbers, $\mathbb{K}=\mathbb{R}$. 
Let $G, H$ be commutative topological groups and assume that $G$ is a connected Baire space and that $H$ contains a dense subgroup of finite torsion-free rank. 
If $f: G\times H\to\mathbb{K}$ is a separately continuous polynomial map, then $f$ is a continuous polynomial map. 
\end{theorem}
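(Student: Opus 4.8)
The plan is to prove joint continuity by separating the roles of the two variables: reduce the $H$-dependence to finitely many \emph{continuous} additive functions (this is where the finite torsion-free rank of $H$ enters), and then recover continuity in the $G$-variable by a finite interpolation that uses only separate continuity. First I would recall the characteristic-$0$ structure theory of polynomial maps: a map of functional degree $\le n$ on an abelian group is, by polarization, a sum of diagonalized symmetric multiadditive forms, and on a product $G\times H$ this refines to a bidegree decomposition $f=\sum_{j+k\le n}f_{j,k}$, where $f_{j,k}$ has degree $j$ in the $G$-slot and $k$ in the $H$-slot. Since polarization in each variable is a fixed finite $\mathbb{K}$-combination of translates of $f$, it preserves separate continuity; hence each $f_{j,k}$ is again a separately continuous polynomial map, and it suffices to prove each one is jointly continuous.

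Next I would establish the key structural fact on $H$: \emph{every continuous polynomial of degree $\le n$ on $H$ is a $\mathbb{K}$-linear combination of products of continuous additive functions $H\to\mathbb{K}$}. The finite torsion-free rank is used to see that the continuous additive functions $H\to\mathbb{K}$ form a finite-dimensional $\mathbb{K}$-space: such a function is determined by its restriction to the dense subgroup $H_0$ (two continuous maps into the Hausdorff space $\mathbb{K}$ agreeing on a dense set coincide), and an additive map out of $H_0$ factors through $H_0\otimes_{\mathbb Z}\mathbb Q\cong\mathbb Q^{r}$ because $\mathbb{K}$ is torsion-free and divisible, so the space injects into $\mathrm{Hom}_{\mathbb Q}(\mathbb Q^{r},\mathbb K)$. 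Fixing a basis $a_1,\dots,a_m$ of this space, the structural fact follows by induction on the degree: the top homogeneous part $p_k$ of a continuous $p$ is continuous, hence its polarization $A(y_1,\dots,y_k)$ is \emph{jointly} continuous (a fixed continuous combination of $p_k$ composed with the continuous addition of $H$), so each slot of $A$ lands in $\operatorname{span}_{\mathbb K}\{a_1,\dots,a_m\}$, and a repeated interpolation argument writes $p_k$ as a polynomial in the $a_s$.

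I would then fix, once and for all, a finite basis $\{\mu_1,\dots,\mu_L\}$ of the (finite-dimensional) space spanned by the degree-$\le n$ monomials in $a_1,\dots,a_m$. For each fixed $u\in G$ the previous step gives unique scalars with $f_{j,k}(u,v)=\sum_{\ell=1}^{L}g_\ell(u)\,\mu_\ell(v)$, with the \emph{same} functions $\mu_\ell$ for all $u$. By linear independence of the $\mu_\ell$ there are points $w_1,\dots,w_L\in H$ with invertible evaluation matrix $[\mu_\ell(w_\beta)]$, whence $g_\ell(u)=\sum_\beta c_{\ell\beta}\,f_{j,k}(u,w_\beta)$ for fixed constants $c_{\ell\beta}$; since each $u\mapsto f_{j,k}(u,w_\beta)$ is continuous by separate continuity, every $g_\ell$ is continuous on $G$. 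Consequently $f_{j,k}(u,v)=\sum_\ell g_\ell(u)\mu_\ell(v)$ is a finite sum of products of functions continuous on $G$ and continuous on $H$, hence jointly continuous; summing over the finitely many bidegrees yields joint continuity of $f$.

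The main obstacle is the structural lemma on $H$, i.e., controlling that the continuous polynomials in the $H$-variable are genuinely governed by \emph{continuous} additive functions and that these span a finite-dimensional space; the finite-torsion-free-rank hypothesis is precisely what forces this finiteness and rules out contributions from pathological (Hamel-type) discontinuous additive maps. I expect the connected Baire hypothesis on $G$ to matter only under the weaker reading in which ``polynomial map'' means \emph{separately} polynomial rather than jointly of bounded total degree: there the clean bidegree decomposition is unavailable a priori, and one must instead upgrade separate to joint continuity by a Namioka-type category argument, using the Baire property of $G$ to obtain joint continuity on a dense $G_\delta$ and the connectedness of $G$ together with the rigidity of polynomials to propagate it to all of $G\times H$.
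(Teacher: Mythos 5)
Your main argument (the bidegree decomposition, the structural lemma on $H$, and the interpolation at the nodes $w_1,\dots,w_L$) is sound as far as it goes, but it proves a weaker statement than Theorem \ref{KLseparatelypol}. You assume from the outset that $f$ is a polynomial map on $G\times H$ of bounded total degree $n$; that assumption is what makes the decomposition $f=\sum_{j+k\le n}f_{j,k}$ available. The paper's remark following the theorem makes the actual hypothesis explicit: only the sections $f_a(y)=f(a,y)$ and $f^b(x)=f(x,b)$ are assumed to be continuous polynomial maps; neither joint continuity, nor joint polynomiality, nor any uniform bound on $\fdeg(f_a)$ as $a$ ranges over $G$ is given. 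Bounding these degrees is precisely the crux of the theorem --- it is where the Baire and connectedness hypotheses on $G$ enter --- and your proposal does not prove it: your last paragraph concedes the issue but offers only a sketch, and the mechanism suggested there (a Namioka-type upgrade to joint continuity on a dense $G_\delta$) is not the right tool, since the obstruction is not continuity but the a priori unboundedness of the degrees of the sections. Indeed, your core argument never invokes connectedness or the Baire property at all, so it cannot be a proof of the stated theorem.

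The missing part runs, in outline, as follows (this is essentially how Kiss--Laczkovich proceed; the paper itself gives no proof and only cites \cite{KL}). For each $n$ let $G_n=\{a\in G:\fdeg(f_a)\le n\}$. The condition $\fdeg(f_a)\le n$ means $\Delta_{h_1}\cdots\Delta_{h_{n+1}}f_a(y)=0$ for all $h_i,y\in H$, and each such expression is a fixed finite linear combination of the functions $a\mapsto f\bigl(a,\,y+\sum_{i\in S}h_i\bigr)$, which are continuous by separate continuity; hence each $G_n$ is closed. The $G_n$ cover $G$, so by the Baire property some $G_N$ has nonempty interior $U$. Your structural lemma now applies with the uniform bound $N$ on $U$: taking a basis $\mu_1,\dots,\mu_L$ of the finite-dimensional space of continuous polynomials of degree $\le N$ on $H$ and interpolation nodes $w_\beta$, the functions $g_\ell(x)=\sum_\beta c_{\ell\beta}f(x,w_\beta)$ are continuous polynomial maps on \emph{all} of $G$ (not merely on $U$), because each $f(\cdot,w_\beta)$ is one by hypothesis; therefore $\tilde f(x,y)=\sum_\ell g_\ell(x)\mu_\ell(y)$ is a continuous polynomial map on $G\times H$, and it agrees with $f$ on $U\times H$. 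Finally, for each fixed $y$ the difference $f(\cdot,y)-\tilde f(\cdot,y)$ is a continuous polynomial map on $G$ vanishing on the open set $U$; since $G$ is connected, such a map vanishes identically (finite-difference induction on the degree, using that the subgroup generated by any neighbourhood of $0$ is open, hence closed, hence all of $G$). Thus $f=\tilde f$ is a continuous polynomial map. Without steps of this kind, what you have established is only that a separately continuous polynomial map \emph{of bounded total degree} is jointly continuous, which is not the statement in question.
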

\begin{remark}
By definition, $f:G\times H\to\mathbb{K}$ is a separately continuous polynomial map if for each $(a,b)\in G\times H$ the functions $f_a:H\to\mathbb{K}; y\mapsto f(a,y)$ and $f^b:G\to\mathbb{K}; x\mapsto f(x,b)$ are continuous polynomial maps. 
In particular, this concept does not assume that $f:G\times H\to\mathbb{K}$ is continuous (i.e., the continuity of $f$ is not part of the hypotheses of Theorem \ref{KLseparatelypol} but a nontrivial consequence of $f$ being a separately continuous polynomial map). 
\end{remark}
\begin{remark}
Note that Theorem \ref{KLseparatelypol} can be used when $G=\mathbb{R}^p$, $H=\mathbb{R}^q$ for all $p,q\geq 1$.
\end{remark}
Let $p,q$ be positive integers and $d=p+q$. 
Consider the following quadratic form  
\begin{equation}
\label{qform}
Q(x_1,\cdots,x_p,y_1,\cdots,y_q)=\sum_{k=1}^px_k^2-\sum_{k=1}^qy_k^2
\end{equation}
on $\mathbb{R}^d$ induced by the following bilinear map 
\[
B(z,w)=z^T I_{p,q} w,
\quad\text{where}\quad 
I_{p,q}=\left(\begin{array}{cc}
       I_p & \mathbf{0}\\ 
\mathbf{0} & -I_q \end{array}\right).
\]
A linear map $L:\mathbb{R}^d\to\mathbb{R}^d$ is an isometry with respect to $B$ (equivalently, with respect to $Q$) if 
\[
B(L(z),L(w))=B(z,w),\quad\forall z,w\in\mathbb{R}^d.
\]
The matrices associated with such isometries are elements of the general orthogonal group $\mathbf{O}(p,q)$, and can be characterized as matrices $A$ satisfying $A^TI_{p,q}A=I_{p,q}$. 
\begin{proposition}
\label{O11}
If $d=2$ and $p=q=1$, then $\Lambda_{\mathbf{O}(1,1),(1,0)}$ has a nonempty interior. 
\end{proposition}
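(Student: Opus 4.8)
The plan is to exhibit a two-parameter family of points inside $\Lambda_{\mathbf{O}(1,1),(1,0)}$ whose defining map is a local diffeomorphism, so that its image contains an open set. First I would identify the relevant orbit. Writing $I_{1,1}=\diag(1,-1)$, the condition $A^{T}I_{1,1}A=I_{1,1}$ forces the first column $(a,c)^{T}$ of any $A\in\mathbf{O}(1,1)$ to satisfy $a^{2}-c^{2}=1$; conversely every such pair occurs, since $A=\begin{pmatrix} a & c \\ c & a \end{pmatrix}$ lies in $\mathbf{O}(1,1)$ whenever $a^{2}-c^{2}=1$. Hence the orbit of $z_0=(1,0)^{T}$ is the full hyperbola $\{(a,c):a^{2}-c^{2}=1\}$, and in particular the subgroup of Lorentz boosts already produces its right branch $\{(\cosh s,\sinh s):s\in\mathbb{R}\}$. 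Since $\Lambda_{G,z_0}$ only grows when we enlarge the set from which differences are taken, it suffices to work with this branch.

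Next I would study the difference map $F:\mathbb{R}^{2}\to\mathbb{R}^{2}$ given by $F(s_1,s_2)=(\cosh s_1-\cosh s_2,\ \sinh s_1-\sinh s_2)$, whose image is contained in $\Lambda_{\mathbf{O}(1,1),(1,0)}$. A direct computation gives
\[
DF(s_1,s_2)=\begin{pmatrix} \sinh s_1 & -\sinh s_2 \\ \cosh s_1 & -\cosh s_2 \end{pmatrix},
\qquad
\det DF(s_1,s_2)=\sinh(s_2-s_1),
\]
which is nonzero whenever $s_1\neq s_2$. By the inverse function theorem, $F$ is a local diffeomorphism near any such point, so it maps a suitable neighborhood onto an open subset of $\mathbb{R}^{2}$ lying inside $\Lambda_{\mathbf{O}(1,1),(1,0)}$. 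This establishes the nonempty interior.

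There is essentially no hard step here: the only points requiring care are the correct identification of the orbit as the hyperbola $a^{2}-c^{2}=1$ and the sign bookkeeping in the Jacobian. As an alternative I could invoke the decomposition $\Lambda_{G,z_0}=G(Gz_0-z_0)$ from \eqref{decomposition}: the curve $Gz_0-z_0$ passes through the origin transversally to the boost direction, and acting by the noncompact group $G$ sweeps it into a two-dimensional region, again yielding a nonempty interior. I would present the Jacobian argument as the main line, since it is the most self-contained.
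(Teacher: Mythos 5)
Your proof is correct and takes essentially the same approach as the paper: identify the orbit of $(1,0)^T$ as the hyperbola $a^2-c^2=1$, form the difference map of a parametrization of it, and conclude via the inverse function theorem at points where the Jacobian determinant is nonzero. The only difference is cosmetic---you use the hyperbolic parametrization $(\cosh s,\sinh s)$ of one branch, giving $\det DF=\sinh(s_2-s_1)$, while the paper uses the rational parametrization $\alpha(u)=\left(\frac{u^2+1}{2u},\frac{u^2-1}{2u}\right)$, giving $\det JF(u,v)=\frac{1}{2}\left(\frac{1}{u^2}-\frac{1}{v^2}\right)$.
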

\begin{proof}
A direct computation shows that $\mathbf{O}(1,1)$ can be characterized by 
\[
\mathbf{O}(1,1)=\left\{\left(\begin{array}{cc}
a & \varepsilon c\\
c & \varepsilon a
\end{array}\right): (a,c)\in\mathbb{R}^2,\ a^2-c^2=1,\ \varepsilon=\pm 1\right\}.
\]
Let $e_1=(1,0)^T$. 
The orbit $\mathbf{O}(1,1)e_{1}=\{(a,c)^T\in\mathbb{R}^2: a^2-c^2=1\}$ is the hyperbola $x^2-y^2=1$ in $\mathbb{R}^2$.
Since $-I\in \mathbf{O}(1,1)$, the Minkowski sum of the hyperbola with itself describes the set 
\[
\Lambda_{\mathbf{O}(1,1),e_{1}}=\{(a-b,c-d)^T: a^{2}-c^{2}=1,b^{2}-d^{2}=1\},
\] 
which can be shown to have a nonempty interior in $\mathbb{R}^2$. 
Indeed, a formal proof can be obtained by a parametrization $\alpha(u)$ of the hyperbola. 
For example, $\alpha(u)=(\frac{u^2+1}{2u},\frac{u^2-1}{2u})$ does the job. 
We define the map $F:\mathbb{R}^2\to\mathbb{R}^2$ by $F(u,v)=\alpha(u)-\alpha(v)$. 
A direct computation shows that the determinant of the Jacobian matrix 
\[
\det JF(u,v)=\frac{1}{2}\left(\frac{1}{u^2}-\frac{1}{v^2}\right)
\]
is nonzero if and only if $|u|\neq |v|$. 
Thus, the inverse function theorem implies that the image of $F$ has a nonempty interior, and the same holds for
$\Lambda_{\mathbf{O}(1,1),e_1}$.  
\end{proof} 
\begin{theorem}
\label{Opq}
Assume that $f$ belongs to a finite-dimensional space of continuous functions $V\subset C(\mathbb{R}^d)$ such that $V$ is invariant by translations and elements of the general orthogonal group $\mathbf{O}(p,q)$. 
Then $f$ is an ordinary polynomial in $d$ real variables. 
\end{theorem}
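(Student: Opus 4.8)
The plan is to deduce Theorem \ref{Opq} directly from Theorem \ref{mainT}. Since $V$ is finite-dimensional and invariant under all translations and under every $P\in\mathbf{O}(p,q)$, it suffices to exhibit a single point $z_0\in\mathbb{R}^d$ for which $\Lambda_{\mathbf{O}(p,q),z_0}$ has nonempty interior; Theorem \ref{mainT} then forces every element of $V$, in particular $f$, to be an ordinary polynomial. Proposition \ref{O11} already settles the base case $p=q=1$, and I would now treat arbitrary $p,q\geq1$ with one geometric argument in the spirit of the Jacobian computation used there.

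First I would fix $z_0=e_1$, the first standard basis vector, so that $Q(z_0)=1$. Since $\mathbf{O}(p,q)$ acts transitively on the level set $\{z:Q(z)=1\}$ by Witt's theorem, the orbit $M:=\mathbf{O}(p,q)\,e_1$ is exactly this level set. The stabilizer of $e_1$ is a copy of $\mathbf{O}(p-1,q)$ acting on the $B$-orthogonal complement $e_1^{\perp}$ (of signature $(p-1,q)$, using $B(e_1,e_1)=1>0$), so the orbit-dimension count gives $\dim M=\binom{d}{2}-\binom{d-1}{2}=d-1$; thus $M$ is a smooth hypersurface. Because $Q$ is constant along $M$, the tangent space at any $z\in M$ is contained in the $B$-orthogonal hyperplane $z^{\perp}=\{v\in\mathbb{R}^d:B(z,v)=0\}$, and by the dimension count it equals $z^{\perp}$.

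Next I would consider the smooth difference map $\Phi:M\times M\to\mathbb{R}^d$, $\Phi(z,w)=z-w$, whose image is precisely the Minkowski difference $M-M=\Lambda_{\mathbf{O}(p,q),e_1}$. Its differential at $(z,w)$ sends $(v,u)\in z^{\perp}\times w^{\perp}$ to $v-u$, so its range is the subspace $z^{\perp}+w^{\perp}$. Each summand is a hyperplane, hence the sum equals all of $\mathbb{R}^d$ as soon as $z^{\perp}\neq w^{\perp}$; and since $B$ is nondegenerate, the correspondence sending the line $\langle z\rangle$ to the hyperplane $z^{\perp}$ is injective, so $z^{\perp}=w^{\perp}$ occurs only when $z$ and $w$ are proportional.

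Finally I would choose two non-proportional points $z,w\in M$, which exist because $\dim M=d-1\geq1$ while a line meets $\{Q=1\}$ in at most two points, so $M$ cannot lie on a single line through the origin. At such a pair $\Phi$ is a submersion, and the inverse (implicit) function theorem gives that its image contains an open neighbourhood of $z-w$; hence $\Lambda_{\mathbf{O}(p,q),e_1}$ has nonempty interior and Theorem \ref{mainT} concludes the proof. The step demanding the most care is the identification of the orbit and its tangent space, namely the transitivity on $\{Q=1\}$ together with the stabilizer computation, since the entire argument rests on $z^{\perp}$ and $w^{\perp}$ being genuine tangent spaces that fail to coincide for non-proportional $z,w$; the rest is the same ``Minkowski difference fattens a hypersurface'' mechanism already seen in Lemma \ref{fatten} and Proposition \ref{O11}.
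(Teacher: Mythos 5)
Your proposal is correct, but it takes a genuinely different route from the paper's. The paper never establishes that $\Lambda_{\mathbf{O}(p,q),z_0}$ has nonempty interior for general $p,q$; instead it splits into cases: for $p,q\geq 2$ it embeds $\mathbf{O}(p)$ and $\mathbf{O}(q)$ as diagonal blocks of $\mathbf{O}(p,q)$, uses restriction operators $L_b,R_a$ to show that every section $f(\cdot,b)$ and $f(a,\cdot)$ lies in a finite-dimensional space invariant under translations and a compact orthogonal group (hence is an ordinary polynomial by Theorem \ref{mainT} and item (ii) of Lemma \ref{fatten}), and then invokes the Kiss--Laczkovich theorem (Theorem \ref{KLseparatelypol}) to pass from ``separately polynomial'' to ``polynomial''; the cases $p>q=1$ and $q>p=1$ are handled the same way with an $\mathbf{O}(1,1)$ block and Proposition \ref{O11}. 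You instead verify the hypothesis of Theorem \ref{mainT} head-on for all $p,q\geq 1$: by Witt's theorem the orbit of $e_1$ is the full quadric $M=\{Q=1\}$, a smooth hypersurface whose tangent space at $z$ is $z^{\perp}$, and the difference map $\Phi(z,w)=z-w$ is a submersion at any non-proportional pair, so $\Lambda_{\mathbf{O}(p,q),e_1}=M-M$ has interior points. This buys you a single uniform argument with no case analysis, no dependence on the nontrivial external Kiss--Laczkovich theorem, and --- since Theorem \ref{mainT} is stated for $X_d$ --- the distributional version of Theorem \ref{Opq} for free, whereas the paper needs the separate Anselone--Korevaar argument of Section \ref{Dist} to remove continuity. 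What the paper's route buys is economy of machinery: it uses only the sphere computation of Lemma \ref{fatten}(ii) and the explicit hyperbola parametrization of Proposition \ref{O11}, with no Witt theorem, orbit-manifold structure, or submersion theorem.

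Two small points. First, your claim that ``a line meets $\{Q=1\}$ in at most two points'' is false as stated when $d\geq 3$: an indefinite quadric of that kind is ruled and contains affine lines. What your proportionality argument actually needs --- and what is true, since $Q(tz)=t^{2}Q(z)$ --- is that a line \emph{through the origin} meets $\{Q=1\}$ in at most two points; with that correction the existence of a non-proportional pair in the infinite set $M$ stands. Second, you can shortcut the orbit--stabilizer count: once Witt gives $M=\{Q=1\}$, the regular value theorem (note $dQ_z=2B(z,\cdot)\neq 0$ on $M$) already yields the hypersurface structure and $T_zM=z^{\perp}$ directly.
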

\begin{proof}
We separate the proof into different cases:
\par
\textbf{Case 1: $p=q=1$.} 
Proposition \ref{O11} and Theorem \ref{mainT} solve this case.
\par
\textbf{Case 2: $p,q\geq 2$.} 
We claim that for each $b\in\mathbb{R}^q$, $f(x,b)$ belongs to a finite-dimensional space of continuous functions defined on $\mathbb{R}^p$ that is invariant by isometries of  $\mathbb{R}^p$, (thus $f(x,b)$ is an ordinary polynomial, since $p\geq 2$). 
Analogously, the same holds for all functions of the form $f(a,y)$, for each $a\in\mathbb{R}^p$, since $q\geq 2$. 
Indeed, for every pair of matrices $A_1\in \mathbf{O}(p)$ and $A_2\in \mathbf{O}(q)$, it is clear that 
$\left(\begin{array}{cc}
       A_1 & \mathbf{0}\\
\mathbf{0} & I_q
\end{array}\right)$ and 
$\left(\begin{array}{cc}
       I_p & \mathbf{0}\\
\mathbf{0} & A_2
\end{array}\right)$ belong to $\mathbf{O}(p,q)$, 
and for each $a\in \mathbb{R}^p$ and each $b\in \mathbb{R}^q$, if we define operators by the following substitutions: 
\begin{align*}
L_{b}: C(\mathbb{R}^{p}\times\mathbb{R}^{q}) & \to C(\mathbb{R}^{p}), & 
R_{a}: C(\mathbb{R}^{p}\times\mathbb{R}^{q}) & \to C(\mathbb{R}^{q}),\\
f(x,y) & \mapsto L_{b}(f)(x)=f(x,b), & 
f(x,y) & \mapsto R_{a}(f)(y)=f(a,y),
\end{align*}
then we can proceed as follows. 
Take $A\in \mathbf{O}(p)$ and set $\mathbf{A}=\left(\begin{array}{cc}
         A & \mathbf{0} \\
\mathbf{0} & I_q \end{array}\right)\in \mathbf{O}(p,q)$. 
Since each $\phi\in W_b:=L_b(V)$ is of the form $\phi=L_b(g)$ for certain $g\in V$, we have that $O_{\mathbf{A}}(g)\in V$ and 
\[
O_A(\phi)(x)
=O_A(L_b(g))(x)
=g(Ax,b)
=L_b(g(Ax,y))
=L_b(O_{\mathbf{A}}(g))(x)\in W_b.
\]
Analogously, for each $h\in\mathbb{R}^p$, we have $\tau_{(h,\mathbf{0})}(g)\in V$ and 
\[
\tau_h(\phi)(x)
=\tau_h(L_b(g))(x)
=g(x+h,b)
=L_b(g(x+h,y))
=L_b(\tau_{(h,\mathbf{0})}(g))(x)\in W_b.
\]
Thus $W_b$ is finite-dimensional and invariant by translations and elements of $\mathbf{O}(p)$, which, by Theorem \ref{mainT} and item (ii) of Lemma \ref{fatten}, means that all its elements are ordinary polynomials in $p$ real variables, since $p\geq 2$. 
In particular, $f^b(x)=f(x,b)$ is an ordinary polynomial. 
An analogous computation ( using $R_a$, $X_a=R_a(V)$, translations of the form $\tau_{(\mathbf{0},h)}$ with $h\in\mathbb{R}^q$, and the operator $O_{\mathbf{A}}$ with 
$\mathbf{A}=\left(\begin{array}{cc}
       I_p & \mathbf{0}\\
\mathbf{0} & A \end{array}\right)$, $A\in \mathbf{O}(q)$) shows that, for each $a\in\mathbb{R}^p$, $f_a(y)=f(a,y)$ is an ordinary polynomial. 
Thus, $a\in\mathbb{R}^p$ and $b\in\mathbb{R}^q$ imply that $f_a(y)$ and $f^b(x)$ are ordinary polynomials, and Theorem \ref{KLseparatelypol} implies that $f(x,y)$ is an ordinary polynomial in $\mathbb{R}^d$. 
\par
\textbf{Case 3: $p>q=1$.} 
The above argument proves that $f(x_1,\cdots,x_p,b)$ is an ordinary polynomial for each $b\in\mathbb{R}$. 
Moreover, taking $A\in \mathbf{O}(1,1)$ and $P=\left(\begin{array}{cc} 
   I_{p-1} & \mathbf{0}\\ 
\mathbf{0} & A \end{array}\right)$, 
we get $P\in \mathbf{O}(p,q)$ and we can use Theorem \ref{mainT} and Proposition \ref{O11} to claim that $f(a,y_1,y_2)$ is an ordinary polynomial for each $a\in\mathbb{R}^{p-1}$. 
Hence, given $a_1,\cdots,a_p,b\in\mathbb{R}$, we have that both $f(x_1,\cdots,x_p,b)$ and $f(a_1,\cdots,a_p,y)$ are continuous polynomials.  
The result follows from applying Theorem \ref{KLseparatelypol}. 
\par
\textbf{Case 4: $q>p=1$.} 
This case has a symmetric proof compared to the previous one. 
\end{proof} 
\begin{remark}
Note that Cartan-Dieudonné's Theorem \cite[Chapter 1, Theorem 7.1]{L} guarantees that every element of $\mathbf{O}(p,q)$ is a product of at most $d=p+q$ symmetries with respect to non-singular hyperplanes of $\mathbb{R}^d$ (with respect to the quadratic form \eqref{qform}). 
Hence, we only need invariance with respect to these symmetries and translations to characterize the ordinary polynomials defined on $\mathbb{R}^d$.
\end{remark}
\section{The symplectic group $\SP(2d,\mathbb{R})$ case} \label{SSp}
In this section, we consider the symplectic group 
\[
\SP(2d,\mathbb{R})=\{M\in\GL(2d,\mathbb{R}): M^T\Omega_d M=\Omega_d\},
\quad\text{where}\quad
\Omega_d=\left(\begin{array}{cc} 
\mathbf{0} & I_d\\ 
      -I_d & \mathbf{0} \end{array}\right),
\]
which is also a group of isometries. 
Indeed, if we define on $\mathbb{R}^{2d}$ the following bilinear form 
\begin{equation}
\label{bform}
\mathcal{B}(x,y)=y^T\Omega_dx,
\end{equation}
then the elements of $\SP(2d,\mathbb{R})$ are the linear isometries of $\mathbb{R}^{2d}$ associated to $\mathcal{B}$ 
(i.e. $M\in \SP(2d,\mathbb{R})$ if and only if $\mathcal{B}(Mx,My)=\mathcal{B}(x,y)$ for all $x,y\in\mathbb{R}^{2d}$).  
\begin{proposition}
\label{SL1}
$\Lambda_{\SL(2,\mathbb{R}),(1,0)}=\Lambda_{\SP(2,\mathbb{R}),(1,0)}$ has a nonempty interior. 
\end{proposition}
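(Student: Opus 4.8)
The plan is to reduce the statement to a single orbit computation in dimension two, after first checking the asserted coincidence of the two groups. First I would verify that $\SP(2,\mathbb{R})=\SL(2,\mathbb{R})$. For $M=\begin{pmatrix} a & b\\ c & d\end{pmatrix}$ a direct computation gives
\[
M^T\Omega_1 M=(ad-bc)\,\Omega_1,
\]
so the defining condition $M^T\Omega_1 M=\Omega_1$ is precisely $\det M=ad-bc=1$. Hence $\SP(2,\mathbb{R})=\SL(2,\mathbb{R})$, and the two sets $\Lambda_{\SL(2,\mathbb{R}),(1,0)}$ and $\Lambda_{\SP(2,\mathbb{R}),(1,0)}$ in the statement literally coincide; it therefore suffices to analyze one of them.

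Next I would compute the orbit of $(1,0)^T$ directly. Writing $M=\begin{pmatrix} a & b\\ c & d\end{pmatrix}\in\SL(2,\mathbb{R})$, we have $M(1,0)^T=(a,c)^T$, the first column of $M$. Given any target $(a,c)\neq(0,0)$, the constraint $ad-bc=1$ is a single nontrivial linear equation in $(b,d)$ — the functional $(d,b)\mapsto a\,d-c\,b$ is nonzero exactly because $(a,c)\neq 0$ — so a completing pair $(b,d)$ always exists. This shows $\SL(2,\mathbb{R})(1,0)^T=\mathbb{R}^2\setminus\{0\}$; the orbit is already an open (indeed dense) subset of $\mathbb{R}^2$.

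Finally, since the orbit has nonempty interior, item (i) of Lemma \ref{fatten} immediately yields that $\Lambda_{\SL(2,\mathbb{R}),(1,0)}$ has nonempty interior, which finishes the proof. As an alternative final step, since $-I_2\in\SL(2,\mathbb{R})$, item (ii) of Lemma \ref{lemma_general} identifies $\Lambda$ with the Minkowski sum $Gz_0+Gz_0$, and one checks by hand that the sum (equivalently difference) of $\mathbb{R}^2\setminus\{0\}$ with itself is all of $\mathbb{R}^2$.

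I do not expect a genuine obstacle in this argument. In contrast with the compact cases $\mathbf{SO}(d)$, $\mathbf{O}(d)$, or the earlier $\mathbf{O}(1,1)$ computation, where the orbit had empty interior and the delicate point was showing that the Minkowski operation ``fattens'' it into something full-dimensional, here the orbit already fills an open set and the fattening step is trivial. The only point that warrants explicit attention is the clean identification $\SP(2,\mathbb{R})=\SL(2,\mathbb{R})$, which is a low-dimensional accident and should be recorded as the reason the proposition can speak of both groups at once.
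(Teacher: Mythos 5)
Your proof is correct, but it takes a genuinely different route from the paper's. The paper's proof is a one-line reduction by group containment: since $\mathbf{O}(2)\subseteq\SP(2,\mathbb{R})=\SL(2,\mathbb{R})$ (strictly speaking it is $\mathbf{SO}(2)$ that lies in $\SL(2,\mathbb{R})$, since reflections have determinant $-1$, a small slip in the paper that does not affect the argument), the orbit of $(1,0)^T$ under $\SL(2,\mathbb{R})$ contains the unit circle, so $\Lambda_{\mathbf{SO}(2),(1,0)}\subseteq\Lambda_{\SL(2,\mathbb{R}),(1,0)}$, and the left-hand side was already shown to have nonempty interior in Example \ref{SO2} and item (ii) of Lemma \ref{fatten}. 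You instead compute the full $\SL(2,\mathbb{R})$-orbit from scratch: $M(1,0)^T$ is the first column of $M$, any nonzero $(a,c)^T$ can be completed to a determinant-one matrix by solving the nontrivial linear equation $ad-cb=1$ in $(b,d)$, so the orbit is all of $\mathbb{R}^2\setminus\{0\}$, an open set, and item (i) of Lemma \ref{fatten} finishes. Both arguments are complete. The paper's buys brevity by reusing the earlier compact-group computation, where the whole difficulty (Minkowski difference fattening a measure-zero orbit) was concentrated; yours is self-contained, avoids Example \ref{SO2} entirely, and yields strictly more information --- the orbit itself is open and, via item (ii) of Lemma \ref{lemma_general} or a direct difference computation, $\Lambda_{\SL(2,\mathbb{R}),(1,0)}=\mathbb{R}^2$. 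You also explicitly verify the identification $\SP(2,\mathbb{R})=\SL(2,\mathbb{R})$ through $M^T\Omega_1M=(\det M)\,\Omega_1$, which the paper merely asserts; that is a worthwhile addition, as it is the only reason the proposition can name both groups at once.
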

\begin{proof}
This follows from item (ii) of Lemma \ref{fatten} or from Example \ref{SO2}, since $\mathbf{O}(2)\subseteq \SP(2,\mathbb{R})=\SL(2,\mathbb{R})=\{A\in \GL(2,\mathbb{R}):\det(A)=1\}$.
\end{proof} 
Now we can prove the main result of this section:
\begin{theorem} \label{Sp}
Assume that $f$ belongs to a finite-dimensional subspace $V\subset C(\mathbb{R}^{d}\times \mathbb{R}^{d})$ of continuous functions such that $V$ is invariant by translations and elements of the symplectic group $\SP(2d,\mathbb{R})$. 
Then $f$ is an ordinary polynomial in $2d$ real variables. 
\end{theorem}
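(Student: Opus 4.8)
The plan is to reduce everything to Theorem~\ref{mainT} by exhibiting a point $z_0\neq 0$ for which $\Lambda_{\SP(2d,\mathbb{R}),z_0}$ has a nonempty interior. The hypotheses placed on $V$ (finite-dimensionality together with invariance under every $\tau_h$ and under $O_P$ for all $P\in\SP(2d,\mathbb{R})$) are then exactly those demanded by Theorem~\ref{mainT}, applied in ambient dimension $2d$, and its conclusion is precisely that every $f\in V$ is an ordinary polynomial in $2d$ real variables. Thus the entire burden is the geometric statement about the orbit, and I would concentrate all the work there.

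The key fact I would establish is that $\SP(2d,\mathbb{R})$ acts \emph{transitively} on $\mathbb{R}^{2d}\setminus\{0\}$. Given any nonzero $v$, nondegeneracy of the bilinear form $\mathcal{B}$ of \eqref{bform} produces a $w$ with $\mathcal{B}(v,w)=1$; declaring $(v,w)$ the first conjugate pair and passing to its $\mathcal{B}$-orthogonal complement, an inductive Darboux-type (symplectic Gram--Schmidt) argument extends $v$ to a full symplectic basis of $\mathbb{R}^{2d}$. Since $\SP(2d,\mathbb{R})$ sends any symplectic basis to any other, it sends $v$ to the first vector of any prescribed symplectic basis, hence to any nonzero target. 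Consequently, for every $z_0\neq 0$ the orbit satisfies $\SP(2d,\mathbb{R})z_0=\mathbb{R}^{2d}\setminus\{0\}$, which manifestly has a nonempty interior.

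With the orbit identified, item (i) of Lemma~\ref{fatten} immediately yields that $\Lambda_{\SP(2d,\mathbb{R}),z_0}$ has a nonempty interior; alternatively, since $-I_{2d}\in\SP(2d,\mathbb{R})$, Lemma~\ref{lemma_general}(ii) identifies $\Lambda_{\SP(2d,\mathbb{R}),z_0}$ with the Minkowski sum of the full-dimensional orbit with itself. Invoking Theorem~\ref{mainT} with $G=\SP(2d,\mathbb{R})$ then closes the argument. The case $d=1$ is subsumed here, but it is also exactly Proposition~\ref{SL1} combined with Theorem~\ref{mainT}, since $\SP(2,\mathbb{R})=\SL(2,\mathbb{R})$.

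I expect the only genuine obstacle to be the clean verification of transitivity, that is, the symplectic basis-extension step; everything afterward is a direct appeal to machinery already assembled. As an alternative paralleling the $\mathbf{O}(p,q)$ treatment of Theorem~\ref{Opq}, one could instead, for $d\geq 2$, split $\mathbb{R}^{2d}=\mathbb{R}^{2}\times\mathbb{R}^{2(d-1)}$, embed the block subgroups $\SP(2,\mathbb{R})$ and $\SP(2(d-1),\mathbb{R})$, use Theorem~\ref{mainT}, Proposition~\ref{SL1} and induction to see that the restrictions $f^{b}$ and $f_{a}$ are polynomial in each factor, and glue the separate polynomiality through the Kiss--Laczkovich Theorem~\ref{KLseparatelypol}. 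The transitivity route avoids the gluing altogether, however, and is the one I would follow.
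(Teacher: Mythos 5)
Your proof is correct, but it takes a genuinely different route from the paper. The paper never establishes (or needs) transitivity of $\SP(2d,\mathbb{R})$ on $\mathbb{R}^{2d}\setminus\{0\}$: instead it slices. It shows that the restriction space $W_{\mathbf{0}}=L_{\mathbf{0}}(V)=\{g(\cdot,\mathbf{0}):g\in V\}$ is finite-dimensional, translation invariant, and invariant under \emph{all} of $\GL(d,\mathbb{R})$, via the embedding $A\mapsto\left(\begin{smallmatrix}A & \mathbf{0}\\ \mathbf{0} & (A^{T})^{-1}\end{smallmatrix}\right)\in\SP(2d,\mathbb{R})$; then Theorem~\ref{mainT} together with item (iv) of Lemma~\ref{lemma_general} makes every element of $W_{\mathbf{0}}$ a polynomial in $d$ variables, the identity $g(\cdot,b)=L_{\mathbf{0}}(\tau_{(\mathbf{0},b)}(g))$ propagates this to every slice $g^{b}$ and (symmetrically) $g_{a}$, and the Kiss--Laczkovich Theorem~\ref{KLseparatelypol} glues the separate polynomiality into polynomiality on $\mathbb{R}^{2d}$. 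Your argument concentrates the work in a single geometric fact --- the symplectic Gram--Schmidt (Darboux) extension of any nonzero vector to a symplectic basis, giving $\SP(2d,\mathbb{R})z_0=\mathbb{R}^{2d}\setminus\{0\}$ --- after which Lemma~\ref{fatten}(i) and Theorem~\ref{mainT} applied in ambient dimension $2d$ finish immediately; that transitivity claim is standard and your sketch of it (nondegeneracy of $\mathcal{B}$, splitting off the hyperbolic plane $\mathrm{span}(v,w)$, induction on the $\mathcal{B}$-orthogonal complement) is sound. What each approach buys: yours avoids the Kiss--Laczkovich gluing entirely, and since Theorem~\ref{mainT} is stated for $X_{2d}$ it proves the distributional version of the theorem in the same stroke (the paper needs the separate argument of Section~\ref{Dist} for that, precisely because Theorem~\ref{KLseparatelypol} requires working with genuine functions); the paper's route, by contrast, needs no facts about the symplectic group beyond the block embedding of $\GL(d,\mathbb{R})$, at the price of invoking the nontrivial separate-polynomiality theorem and restricting the direct argument to continuous functions. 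Your parenthetical alternative (splitting $\mathbb{R}^{2d}=\mathbb{R}^{2}\times\mathbb{R}^{2(d-1)}$ and gluing) is essentially the paper's philosophy transplanted from the $\mathbf{O}(p,q)$ case, though with the paper's coordinate convention for $\Omega_d$ the two symplectic blocks are not literally consecutive coordinate blocks, so some bookkeeping would be needed there.
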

\begin{proof}
As before, for each $a,b\in\mathbb{R}^d$, consider the operators $L_b,R_a:C(\mathbb{R}^{d}\times \mathbb{R}^{d})\to C(\mathbb{R}^{d})$ given by $L_b(f)(x)=f(x,b)$ and $R_a(f)(y)=f(a,y)$. 
We claim that $W_{\mathbf{0}}=L_{\mathbf{0}}(V)$ is formed by ordinary polynomials. 
Indeed, take $\phi\in W_{\mathbf{0}}$. 
Then $\phi(x)=L_{\mathbf{0}}(g)(x)=g(x,{\mathbf{0}})$ for certain $g(x,y)\in V$. 
To compute $\tau_h(\phi)$ with $h\in \mathbb{R}^d$, we have that
\[
\tau_h(\phi)(x)=\tau_h(L_{\mathbf{0}}(g))(x)=g(x+h,{\mathbf{0}})=L_{\mathbf{0}}(g(x+h,y))=L_{\mathbf{0}}(\tau_{(h,{\mathbf{0}})}(g))(x)\in W_{\mathbf{0}}.
\]
To compute $O_A(\phi)$ with $A\in \GL(d,\mathbb{R})$, we consider the matrix 
\[
\mathcal{A}=\left(\begin{array}{cc} 
         A & \mathbf{0}\\ 
\mathbf{0} & (A^T)^{-1} \end{array}\right)\in\SP(2d,\mathbb{R}),
\]
and have that 
\[
O_A(\phi)(x)
=O_A(L_{\mathbf{0}}(g))(x)
=g(Ax,\mathbf{0})
=L_{\mathbf{0}}(g(Ax,(A^T)^{-1}y))
=L_{\mathbf{0}}(O_{\mathcal{A}}(g))(x)\in W_{\mathbf{0}}.
\]
Thus $W_{\mathbf{0}}$ is invariant by translations and elements of $\GL(d,\mathbb{R})$, and thanks to Theorem \ref{mainT} and item (iv) of Lemma \ref{lemma_general}, all its elements are ordinary polynomials. 
Given $b\in\mathbb{R}^d$ and $g\in V$, 
\[
g^b(x)
:=g(x,b)
=\tau_{(\mathbf{0},b)}(g)(x,\mathbf{0})
=L_{\mathbf{0}}(\tau_{(\mathbf{0},b)}(g))\in L_{\mathbf{0}}(V)=W_{\mathbf{0}},
\]
which implies that $g^b(x)$ is also an ordinary polynomial. 
A similar computation shows that for any $g\in V$ and $a\in\mathbb{R}^d$, $g_a(y):=g(a,y)$ is an ordinary polynomial on $\mathbb{R}^d$. 
Again, a direct application of Theorem \ref{KLseparatelypol} shows that all elements of $V$ are ordinary polynomials.
\end{proof}
\section{A general remark about the distributional case} 
\label{Dist}
It is remarkable that we have been able to characterize ordinary polynomials by their invariance properties with respect to several classical groups $G\subseteq \GL(d,\mathbb{R}^d)$, avoiding the use of exponential polynomials. 
In Sections \ref{SOpq} and \ref{SSp}, we need to impose the continuity of the involved functions in our arguments. 
This can be avoided -- so that the characterizations given in Theorems \ref{Opq} and \ref{Sp} also hold for distributions -- if we use Anselone-Korevaar's theorem \cite{AK}, 
stating that quasipolynomial distributions (i.e. distributions $f\in \mathcal{D}(\mathbb{R}^d)'$ that belong to a finite-dimensional translation invariant subspace of $\mathcal{D}(\mathbb{R}^d)'$ ) are exponential polynomials. 
\begin{theorem}
Let $G$ be a subgroup of $\GL(d,\mathbb{R})$ and let $\mathcal{V}$ be a vector subspace of $C(\mathbb{R}^d)$. 
Assume that $\mathcal{V}$ has the following property:
\begin{itemize}
\item[(P)] A function $f\in C(\mathbb{R}^d)$ belongs to $\mathcal{V}$ if and only if there exists a finite-dimensional vector space $V\subset C(\mathbb{R}^d)$ such that: 
(i) $f\in V$ and 
(ii) $V$ is invariant by translations and elements of $G$.
\end{itemize}
Then a distribution $f\in\mathcal{D}(\mathbb{R}^d)'$ belongs to $\mathcal{V}$ if and only if there exists a finite-dimensional vector space $V\subset X_d$ such that: 
(i) $f\in V$ and 
(ii) $V$ is invariant by translations and elements of $G$.
\end{theorem}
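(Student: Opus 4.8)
The plan is to prove the two implications separately, observing that all the substance lies in sufficiency and that the bridge between the distributional and continuous settings is Anselone-Korevaar's theorem. For necessity, suppose the distribution $f$ belongs to $\mathcal{V}$; then $f$ is (represented by) a continuous function lying in $\mathcal{V}\subseteq C(\mathbb{R}^d)$, and the ``only if'' half of property (P) produces a finite-dimensional $V\subseteq C(\mathbb{R}^d)$ with $f\in V$ and $V$ invariant by translations and elements of $G$. Since $C(\mathbb{R}^d)\subseteq X_d$, this very $V$ is the required subspace of $X_d$.

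For sufficiency, assume $f$ lies in a finite-dimensional $V\subseteq X_d$ that is invariant by translations and by the elements of $G$. First I would retain only the translation invariance: since $V$ is finite-dimensional and $\tau_h(V)\subseteq V$ for all $h\in\mathbb{R}^d$, Anselone-Korevaar's theorem \cite{AK} guarantees that every element of $V$ is an exponential polynomial $\sum_k p_k(x)e^{\langle\lambda_k,x\rangle}$, hence a smooth, and in particular continuous, function. As two continuous functions that agree as distributions agree everywhere, each element of $V$ has a unique continuous representative, and identifying it with that representative turns $V$ into a finite-dimensional subspace of $C(\mathbb{R}^d)$ of the same dimension, still containing $f$.

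It then remains to transfer the invariance. The distributional operators are defined by $\tau_h(f)\{\phi\}=f\{\tau_{-h}\phi\}$ and $O_P(f)\{\phi\}=\frac{1}{|\det(P)|}f\{O_{P^{-1}}(\phi)\}$, and a change of variables in the defining integrals shows that, for a continuous $g$, the distribution $\tau_h(g)$ is represented by $x\mapsto g(x+h)$ and $O_P(g)$ by $x\mapsto g(Px)$; that is, the distributional operators genuinely extend their classical counterparts. Consequently, when $g\in V$ is continuous and $\tau_h(g),O_P(g)\in V$ are continuous as well, these distributional images coincide with the classical translates and substitutions. Thus the identified subspace $V\subseteq C(\mathbb{R}^d)$ is invariant by translations and elements of $G$, and the ``if'' half of property (P), applied to $f\in V$, delivers $f\in\mathcal{V}$.

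I expect the only delicate point to be the operator-compatibility in the last paragraph, ensuring that distributional invariance descends to classical invariance. The genuine engine of the argument, however, is the appeal to Anselone-Korevaar: it is precisely this step that upgrades the a priori merely distributional members of $V$ to bona fide continuous (indeed smooth) functions, so that no regularity is lost in passing to $C(\mathbb{R}^d)$. Once that upgrade is secured, everything else is a routine change-of-variables check followed by the direct invocation of property (P).
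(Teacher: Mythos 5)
Your proof is correct and follows essentially the same route as the paper: Anselone--Korevaar upgrades the distributional members to continuous (exponential-polynomial) functions, after which property (P) finishes the argument. The only cosmetic difference is that the paper applies Anselone--Korevaar to $f$ alone and then passes to the minimal invariant subspace $R_G^c(f)\subseteq V$ inside $C(\mathbb{R}^d)$, whereas you upgrade all of $V$ at once and verify explicitly that the distributional operators $\tau_h$, $O_P$ extend the classical ones --- a compatibility the paper uses but leaves implicit.
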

\begin{proof}
This is a direct consequence of Anselone-Korevaar's Theorem. 
Indeed, if $f\in \mathcal{D}(\mathbb{R}^d)'$ belongs to $V\subset X_d$, which is finite-dimensional, invariant by translations and elements of $G$, then $f$ is an exponential polynomial in the distributional sense. 
In particular, $f$ can be assumed to be continuous, since it is equal to a continuous function almost everywhere. 
Thus $R_G^c(f)$, the smallest subspace of $C(\mathbb{R}^d)$ that contains $f$ and is invariant by the operators $O_P$ and $\tau_h$ for all $h\in\mathbb{R}^d$ and $P\in G$, is a subspace of $V$, has finite dimension, and is translation invariant and invariant by elements of $G$. 
Hence $(P)$ implies that $f\in\mathcal{V}$. 
\end{proof}
\section{Polynomials defined on $\mathbb{K}^d$}
Consider scalar functions $f:\mathbb{K}^d\to\mathbb{K}$, where $\mathbb{K}$ is a field of characteristic zero. 
A special class of polynomial maps $f:\mathbb{K}^d\to\mathbb{K}$ is formed by those functions which belong to the function algebra generated by the additive functions $a:\mathbb{K}^d\to\mathbb{K}$ (i.e., $a(x+y)=a(x)+a(y), \forall x,y\in\mathbb{K}^d$), and the constants. 
These functions are simply called {\it polynomials} or, sometimes, {\it normal polynomials}. 
\par
In the case $\mathbb{K}=\mathbb{R}$ or $\mathbb{K}=\mathbb{C}$, every continuous polynomial map is a polynomial. 
In fact, it is an ordinary polynomial. 
Moreover, the following theorem is known (see e.g. \cite[Theorem 8]{Schwaiger_Prager_2008}).
\begin{theorem}
\label{char1}
Let $f:\mathbb{K}^d\to\mathbb{K}$ be a polynomial map, where $\mathbb{K}$ is a field of characteristic zero. 
Then $f$ is a polynomial if and only if the dimension of $\rm{span} \{\tau_h(f):h\in\mathbb{K}^d\}$ is finite.
\end{theorem}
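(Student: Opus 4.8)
The plan is to prove the two implications separately, disposing of the easy direction first and reserving the real work for the converse. For the forward implication, suppose $f$ is a (normal) polynomial. Then $f$ lies in the algebra generated by finitely many additive functions $a_1,\dots,a_m:\mathbb{K}^d\to\mathbb{K}$ together with the constants, say $f$ has degree $\le n$ in these generators. Because each $a_i$ satisfies $a_i(x+h)=a_i(x)+a_i(h)$, expanding $\tau_h\bigl(\prod_i a_i^{e_i}\bigr)(x)=\prod_i\bigl(a_i(x)+a_i(h)\bigr)^{e_i}$ shows that every translate of a monomial of degree $\le n$ is again a polynomial of degree $\le n$ in $a_1,\dots,a_m$, with coefficients depending only on $h$. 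Hence $\operatorname{span}\{\tau_h(f):h\in\mathbb{K}^d\}$ sits inside the finite-dimensional space of polynomials of degree $\le n$ in $a_1,\dots,a_m$, and is therefore finite-dimensional.

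For the converse -- the substantive direction -- I would set $V=\operatorname{span}\{\tau_h(f):h\in\mathbb{K}^d\}$, a finite-dimensional, translation-invariant $\mathbb{K}$-subspace of the space of functions $\mathbb{K}^d\to\mathbb{K}$. The key structural observation is that $f$, being a polynomial map of some functional degree $n$, satisfies $\Delta_h^{n+1}f=0$ for every $h$ (specialize the mixed Fréchet equation \eqref{Fmixed} to $h_1=\dots=h_{n+1}=h$). Since $\Delta_h^{n+1}(\tau_{h_0}f)=\tau_{h_0}(\Delta_h^{n+1}f)=0$, this identity propagates by linearity to all of $V$, so each operator $\tau_h|_V$ satisfies $(\tau_h|_V-I)^{n+1}=0$; that is, $\tau_h|_V$ is \emph{unipotent}, and the family $\{\tau_h|_V:h\in\mathbb{K}^d\}$ is commutative. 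Exploiting $\operatorname{char}\mathbb{K}=0$, I would then define $L_h:=\log(\tau_h|_V)=\sum_{j\ge1}\frac{(-1)^{j-1}}{j}(\tau_h|_V-I)^{j}$, a finite (nilpotent) sum lying in $\operatorname{End}_{\mathbb{K}}(V)$. Because the $\tau_h|_V$ commute, the logarithm linearizes the group law, giving $L_{h+h'}=L_h+L_{h'}$; hence, fixing a basis of $V$, each matrix entry $\ell_{pq}(h)$ of $L_h$ is an additive function $\mathbb{K}^d\to\mathbb{K}$, and only finitely many such functions occur.

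To finish I would reconstruct $f$ from this data. Writing $\tau_h|_V=\exp(L_h)=\sum_{j\ge0}\frac1{j!}L_h^{\,j}$ and evaluating the identity $\tau_h(f)=\exp(L_h)f$ at the point $0\in\mathbb{K}^d$ yields
\[
f(h)=(\tau_h f)(0)=\sum_{j\ge0}\frac1{j!}\,(L_h^{\,j}f)(0).
\]
Since the entries of $L_h$ are the additive functions $\ell_{pq}$, each $(L_h^{\,j}f)(0)$ is a polynomial of degree $\le j$ in the $\ell_{pq}$, and the sum is finite because $L_h$ is nilpotent; therefore $f$ is a polynomial in the finitely many additive functions $\ell_{pq}$ together with constants, i.e.\ a normal polynomial. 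The main obstacle I anticipate is making the $\log$/$\exp$ formalism fully rigorous over an arbitrary field of characteristic zero: one must check that the logarithm of a unipotent $\mathbb{K}$-operator has entries in $\mathbb{K}$ (not merely in $\overline{\mathbb{K}}$), that commutativity genuinely forces additivity of $h\mapsto L_h$, and that evaluation at $0$ is a legitimate $\mathbb{K}$-linear functional on $V$. The characteristic-zero hypothesis enters precisely through the factors $\frac1j$ and $\frac1{j!}$ and cannot be dispensed with.
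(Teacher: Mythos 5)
Your proof is correct, but there is nothing in the paper to compare it against: the paper does not prove Theorem \ref{char1} at all, it quotes it as a known result with a citation to \cite[Theorem 8]{Schwaiger_Prager_2008}. So what you have produced is a self-contained proof of a statement the authors import as a black box. Your forward direction is the standard expansion of translates of monomials in additive functions and is fine. Your converse is the interesting part, and it is sound: the hypothesis that $f$ is a polynomial map gives $\Delta_h^{n+1}f=0$ for all $h$, this annihilation passes by linearity to the translation span $V$, so every $\tau_h|_V$ is unipotent; then the logarithms $L_h=\log(\tau_h|_V)$ are nilpotent, lie in $\operatorname{End}_{\mathbb{K}}(V)$, and depend additively on $h$, and evaluating $\tau_h f=\exp(L_h)f$ at $0$ exhibits $f(h)$ as a polynomial with constant coefficients in the finitely many additive entries $\ell_{pq}(h)$. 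This is precisely the unipotent-group/nilpotent-Lie-algebra correspondence in characteristic $0$. The three ``obstacles'' you flag at the end are in fact non-issues: $L_h$ is by construction a polynomial in $\tau_h|_V$ with rational coefficients, so its entries automatically lie in $\mathbb{K}$ (no passage to $\overline{\mathbb{K}}$ ever occurs); additivity of $h\mapsto L_h$ follows from the formal identities $\exp(\log(I+N))=I+N$ and $\log(UV)=\log U+\log V$, valid upon substituting commuting nilpotents into power series over $\mathbb{Q}$; and evaluation at $0$ is trivially a $\mathbb{K}$-linear functional on a space of $\mathbb{K}$-valued functions. What your route buys over the citation is transparency: it makes visible exactly where characteristic $0$ is used (the denominators $j$ and $j!$), and it derives the result from nothing beyond Fr\'echet's equation and linear algebra, whereas the cited source develops the structure theory of generalized polynomials first.
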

Now, if the function $f$ is of the form $f:\mathbb{K}^d\to\mathbb{K}$ for a certain field $\mathbb{K}$ of characteristic $0$ and $q(\tau_h)(f)=0$ for all $h\in\mathbb{K}^d$ and certain ordinary polynomial $q(x)=a_0+a_1x+\cdots+a_nx^n\in\mathbb{K}[x]$, the arguments used for the proof of Lemma \ref{pre} show that $f$ is a polynomial function on $\mathbb{K}$ of degree $\leq n-1$. 
Moreover, if $\rm{span} \{\tau_h(f):h\in\mathbb{K}^d\}$ is finite-dimensional, then Theorem \ref{char1} implies that $f=P(a_1(x),\cdots,a_s(x))$ for a certain polynomial $P\in\mathbb{K}[x_1,\cdots,x_s]$ of total degree $\leq n-1$ and certain additive functions $a_i:\mathbb{K}^d\to\mathbb{K}$. 
Now, a simple computation shows that, for any field $\mathbb{K}$ and any $d\geq 1$, $\Lambda_{\GL(d,\mathbb{K}),z_0}=\mathbb{K}^d$ for each $z_0\in\mathbb{K}^d\setminus\{0\}$. 
Indeed, given $z_1\in \mathbb{K}^d\setminus \{0\}$, there exists $P\in \GL(d,\mathbb{K})$ such that $Pz_0=z_1$. 
Thus $\GL(d,\mathbb{K})z_0=\mathbb{K}^d\setminus\{0\}$, which implies that $\Lambda_{\GL(d,\mathbb{K}),z_0}=\mathbb{K}^d$,  and a similar argument to the one given at Theorem \ref{mainT}  shows the following:
\begin{theorem}
\label{general_K}
Let $\mathbb{K}$ be a field of characteristic $0$. 
If $f:\mathbb{K}^d\to\mathbb{K}$ belongs to a finite-dimensional space of $\mathbb{K}$-valued functions defined on $\mathbb{K}^d$ that is invariant by translations and elements of $\GL(d,\mathbb{K})$, then $f$ is a polynomial.
\end{theorem}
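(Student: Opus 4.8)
The plan is to mirror the proof of Theorem \ref{mainT}, replacing its topological input (the nonempty interior of $\Lambda_{G,z_0}$ together with Lemma \ref{pre}) by the purely algebraic facts available over a field of characteristic $0$. Let $V$ be the finite-dimensional $\mathbb{K}$-space containing $f$, invariant under all $\tau_h$ ($h\in\mathbb{K}^d$) and all $O_P$ ($P\in\GL(d,\mathbb{K})$), and write $N=\dim_{\mathbb{K}}V$.

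First I would record the structural facts about the operators $\tau_y:V\to V$: each $\tau_y$ is invertible (with inverse $\tau_{-y}$), so its eigenvalues in $\overline{\mathbb{K}}$ are nonzero; the family is commutative with $\tau_{y+z}=\tau_y\tau_z$; and $\tau_{Py}=O_{P^{-1}}\tau_y O_P$ for $P\in\GL(d,\mathbb{K})$, so that $\tau_{Py}$ and $\tau_y$ are conjugate and share the same characteristic polynomial. These are the identities \eqref{traslation} and \eqref{similarity}, whose proofs are formal and field-independent. Since Corollary \ref{coFro} is stated for an arbitrary field, I may apply it to the commuting pair $\tau_y,\tau_z$ to arrange eigenvalues so that $\lambda_i(\tau_{y+z})=\lambda_i(\tau_y)\lambda_i(\tau_z)$, and then, exactly as in \eqref{autova}, obtain $\lambda_i(\tau_{(P-I)y})=\lambda_{\sigma(i)}(\tau_y)/\lambda_i(\tau_y)$ for a permutation $\sigma$ depending on $P$.

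Next, fixing a nonzero $z_0$ and an enumeration of the eigenvalues of $\tau_{z_0}$, I would form $q(z)=\prod_{\sigma\in S_N^*}\prod_{i=1}^N\bigl(z-\lambda_{\sigma(i)}(\tau_{z_0})/\lambda_i(\tau_{z_0})\bigr)$ as in \eqref{pol_q}. Each inner product is the characteristic polynomial of the $\mathbb{K}$-linear operator $\tau_{(P-I)z_0}$, hence lies in $\mathbb{K}[z]$; therefore $q\in\mathbb{K}[z]$, and its constant term is nonzero because every $\lambda_i(\tau_{z_0})\ne0$. By conjugacy, $q$ is a multiple of the characteristic polynomial of $\tau_{Q(P-I)z_0}$ for all $P,Q\in\GL(d,\mathbb{K})$, hence of that of $\tau_z$ for every $z$ in $\Lambda_{\GL(d,\mathbb{K}),z_0}=\GL(d,\mathbb{K})\bigl(\GL(d,\mathbb{K})z_0-z_0\bigr)$ by \eqref{decomposition}. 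Cayley--Hamilton then gives $q(\tau_z)=0$ on $V$ for every $z\in\Lambda_{\GL(d,\mathbb{K}),z_0}$. Here the field case is genuinely easier than the real one: since $\Lambda_{\GL(d,\mathbb{K}),z_0}=\mathbb{K}^d$ for every $z_0\ne0$ (as noted just before the statement), I obtain $q(\tau_z)f=0$ for all $z\in\mathbb{K}^d$, with no density or interior argument required.

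Finally, with $q(z)=a_0+\cdots+a_mz^m\in\mathbb{K}[z]$, $a_0\ne0$, and $q(\tau_z)f=0$ for all $z\in\mathbb{K}^d$, the telescoping computation of Lemma \ref{pre} applies verbatim — here the hypothesis $\operatorname{char}\mathbb{K}=0$ is exactly what lets me rescale the steps by the integers $1,\dots,m$ — and yields $\Delta_{h_m}\cdots\Delta_{h_1}f=0$ for all $h_1,\dots,h_m\in\mathbb{K}^d$, so $f$ is a polynomial map of functional degree $\le m-1$. Since $V$ is translation invariant and finite-dimensional, $\operatorname{span}\{\tau_h f:h\in\mathbb{K}^d\}\subseteq V$ is finite-dimensional, so Theorem \ref{char1} upgrades $f$ to a (normal) polynomial, as claimed. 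I expect the only point needing real care to be the verification that $q$ has coefficients in $\mathbb{K}$ — that each factor is the characteristic polynomial of a $\mathbb{K}$-linear operator rather than merely a product of ratios of eigenvalues living in $\overline{\mathbb{K}}$ — since everything else is a formal transcription of the real case with the topology stripped away.
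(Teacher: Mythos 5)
Your proposal is correct and follows essentially the same route as the paper: the paper proves Theorem \ref{general_K} by noting $\Lambda_{\GL(d,\mathbb{K}),z_0}=\mathbb{K}^d$ for $z_0\neq 0$ and then invoking ``a similar argument to the one given at Theorem \ref{mainT}'' (via Corollary \ref{coFro} and the telescoping of Lemma \ref{pre}, both valid over $\mathbb{K}$), finishing with Theorem \ref{char1} exactly as you do. You have merely written out in full the details the paper leaves implicit, including the key point that each factor of $q$ is the characteristic polynomial of a $\mathbb{K}$-linear operator and hence lies in $\mathbb{K}[z]$.
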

For the proof of Theorem \ref{general_K}, we can restrict our attention to dilations $D_k\in \GL(d,\mathbb{K})$, $D_k(x_1,\cdots,x_d)=(kx_1,\cdots,kx_d)$, $k\in\mathbb{N}=\{1,2,\cdots\}$, and avoid using Lemma \ref{pre} as well as the arguments from the proof of Theorem \ref{mainT}. 
Instead, we use the following technical result:
\begin{lemma}
\label{raices}
Let $\mathbb{K}$ be a field and $A\subset\mathbb{K}\setminus\{0\}$ be a nonempty finite set such that 
$A=\varphi_n(A)$ for infinitely many positive integers $n$, where $\varphi_n(z)=z^n$ for all $z\in A$. 
Then $A=\{1\}$. 
\end{lemma}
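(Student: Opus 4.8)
The plan is to use that whenever $\varphi_n(A)=A$ the map $\varphi_n\colon A\to A$ is a surjection of a finite set onto itself, hence a bijection. First I would pick a single admissible index $n_0\ge 2$ (one exists, since infinitely many indices are admissible) and iterate it: for $a\in A$ the orbit $a,a^{n_0},a^{n_0^2},\dots$ stays in the finite set $A$, so $\varphi_{n_0}^{\,p}(a)=a$ for some $p\ge 1$, i.e.\ $a^{\,n_0^{\,p}-1}=1$ with exponent $\ge 1$. Thus every element of $A$ is a root of unity, the subgroup $\langle A\rangle\subseteq\mathbb{K}\setminus\{0\}$ they generate is finite and therefore cyclic, and I set $M=\lvert\langle A\rangle\rvert$. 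Fixing an isomorphism $\langle A\rangle\cong\mathbb{Z}/M\mathbb{Z}$, the set $A$ corresponds to a subset $S$ generating $\mathbb{Z}/M\mathbb{Z}$, and each $\varphi_n$ corresponds to multiplication by $n$.

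Next I would analyse the admissible set $\mathcal{N}=\{n\ge 1:\varphi_n(A)=A\}$. Since $\varphi_{nm}=\varphi_n\circ\varphi_m$ and $\varphi_1=\mathrm{id}$, the set $\mathcal{N}$ is a multiplicative monoid containing $1$; and since $z^M=1$ for every $z\in A$, admissibility of $n$ depends only on $n\bmod M$. I would then show every $n\in\mathcal{N}$ is coprime to $M$: if $g=\gcd(n,M)>1$, then $\varphi_n(\langle A\rangle)$ is the proper subgroup of $n$-th powers (of order $M/g$), so $\langle A\rangle=\langle\varphi_n(A)\rangle$ would lie in a proper subgroup of itself, a contradiction. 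Reducing modulo $M$, the image $\mathcal{N}\bmod M$ is therefore a submonoid of the finite group $(\mathbb{Z}/M\mathbb{Z})^{\times}$, hence a subgroup, and $S$ is invariant under multiplication by each of its elements.

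To conclude I would produce an admissible index divisible by $M$: for such an $n$ every $z\in A$ satisfies $z^n=1$, so $\varphi_n(A)=\{1\}$, and $\varphi_n(A)=A$ forces $A=\{1\}$ (equivalently $M=1$). This last step is exactly where I expect the real difficulty, and it is the heart of the matter. The reductions above show that admissibility is a residue condition modulo $M$ whose admissible residues all lie in $(\mathbb{Z}/M\mathbb{Z})^{\times}$; consequently a sparse infinite admissible set confined to unit classes — for instance the odd integers, precisely what occurs for $A=\{-1\}$ with $M=2$ — need not meet $M\mathbb{Z}$. The hard part is therefore to secure an admissible $n\equiv 0\pmod M$, and I would obtain it by invoking the hypothesis in the stronger form that is genuinely available in the intended application, where the admissible set is cofinite — indeed all of $\mathbb{N}$, since the dilations $D_k$ lie in $\GL(d,\mathbb{K})$ for every $k\ge 1$ — so that $n=M$ is admissible and yields $A=\{1\}$ at once.
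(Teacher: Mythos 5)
Your analysis is sound as far as it goes, and your diagnosis at what you call the heart of the matter is in fact a refutation: the lemma \emph{as stated} is false. Take $\mathbb{K}=\mathbb{Q}$ and $A=\{-1\}$; then $\varphi_n(A)=A$ for every odd $n$, hence for infinitely many positive integers, yet $A\neq\{1\}$. Your positive steps are all correct: every element of $A$ is a root of unity (iterate a single admissible $n_0\geq 2$ and use that $\varphi_{n_0}$ permutes the finite set $A$); the admissible exponents form a multiplicative monoid; admissibility depends only on $n\bmod M$ with $M=\lvert\langle A\rangle\rvert$; admissible residues are units and form a subgroup of $(\mathbb{Z}/M\mathbb{Z})^{\times}$. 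And your repaired statement --- if in addition some admissible $n$ is divisible by $M$, in particular if the admissible set is cofinite in $\mathbb{N}$, then $A=\{1\}$ --- is correct.

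For comparison, the paper's proof is a short pigeonhole argument: it chooses $m,n\in S$ with $z^m=z^n$, sets $k=m-n>0$, deduces $z^k=1$, and then asserts that $z^k=1\in A$ and that $\varphi_k(A)=A$, contradicting injectivity of $\varphi_k$ on $A$. Both assertions presuppose $k\in S$, i.e., that the admissible set is closed under differences, which is not part of the hypothesis; your counterexample (the odd integers) is precisely an infinite admissible set lacking this closure. So your proposal does not merely take a different route: it identifies an unfixable gap in the paper's own proof, since the statement itself fails. One caveat about your final fix, though. The full set of dilations is indeed available for Theorem \ref{general_K}, where $G=\GL(d,\mathbb{K})$ contains every $D_k$, so your strengthened lemma suffices there. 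But Lemma \ref{raices} is invoked in the paper chiefly to prove Theorem \ref{general_K_dilations}, whose hypothesis supplies invariance under $D_k$ for only \emph{infinitely many} $k$; there the cofinite strengthening is not available (the admissible set could, e.g., be the powers of a single odd $k_0$, which at the level of abstract sets is compatible with $-1$ in the spectrum), so that theorem needs a genuinely different repair --- for instance, working with the exponential characters $m:\mathbb{K}^d\to\overline{\mathbb{K}}^{\ast}$ occurring in $V$: when $\mathrm{char}(\mathbb{K})=0$ the group $(\mathbb{K}^d,+)$ is divisible, so any character of finite order is trivial, and a finite set of characters stable under $m\mapsto m^{k_0}$ for one $k_0\geq 2$ must reduce to $\{1\}$.
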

\begin{proof}
Obviously, $A=\{1\}$ satisfies all the given conditions. 
Assume that $A$ contains an element $z$ other than $1$. 
Let $S$ be the infinite subset of $\mathbb{N}$ such that $A=\varphi_n(A)$ for all $n\in S$. 
As $A$ is finite, $S$ must contain two integers $m,n$ such that $z^m=z^n\in A$. 
We may assume that $k=m-n>0$. 
Then $z^n(z^k-1)=0$ and $z^n\neq 0$ imply that $\varphi_k(z)=z^k=1\in A$. 
Since $A$ is finite, $\varphi_k(A)=A$ implies that $\varphi_k$ is bijective. 
This is impossible since $z\neq 1$ but $z^k=1=1^k$.
\end{proof}
\begin{theorem}
\label{general_K_dilations}
Let $\mathbb{K}$ be a field of characteristic $0$. 
If $f:\mathbb{K}^d\to\mathbb{K}$  belongs to a finite-dimensional space $V$ of $\mathbb{K}$-valued functions defined on $\mathbb{K}^d$ that is invariant by translations and dilations $D_k$ for infinitely many $k\in\mathbb{N}$, then $f$ is a polynomial.
\end{theorem}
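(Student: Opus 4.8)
The plan is to force every translation operator $\tau_y:V\to V$ to be \emph{unipotent}, and then to read off that $f$ is a polynomial map whose translates span a finite-dimensional space, so that Theorem \ref{char1} applies. Write $N=\dim_{\mathbb{K}}V$, and for each $k$ with $D_k(V)\subseteq V$ let $O_k:V\to V$ be given by $O_k(g)(x)=g(kx)$. First I would check that $O_k$ is a bijection of $V$ (it is injective because $x\mapsto kx$ is onto $\mathbb{K}^d$, hence bijective since $\dim V<\infty$), and that it conjugates translations according to $O_k\tau_{ky}=\tau_y O_k$, i.e. $\tau_{ky}=O_k^{-1}\tau_y O_k$. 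Consequently $\tau_{ky}$ and $\tau_y$ share the same eigenvalues in $\overline{\mathbb{K}}$, counted with multiplicity. On the other hand $\tau_{ky}=\tau_y^{\,k}$, so the eigenvalues of $\tau_{ky}$ are precisely the $k$-th powers of those of $\tau_y$.

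Fix $y$ and let $A\subseteq\overline{\mathbb{K}}\setminus\{0\}$ be the set of distinct eigenvalues of $\tau_y$; it is finite and contains no $0$ since each $\tau_y$ is invertible (with inverse $\tau_{-y}$). The two observations above give $\varphi_k(A)=A$ for the infinitely many $k$ with $D_k(V)\subseteq V$, where $\varphi_k(z)=z^k$; since $\overline{\mathbb{K}}$ has characteristic $0$, Lemma \ref{raices} forces $A=\{1\}$. Thus the only eigenvalue of $\tau_y$ is $1$, its characteristic polynomial is $(z-1)^N$, and Cayley--Hamilton yields $(\tau_y-I)^N=\Delta_y^{\,N}=0$ on $V$. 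As this holds for \emph{every} $y\in\mathbb{K}^d$, we get $\Delta_y^{\,N}f=0$ for all $y$, the unmixed Fréchet identity.

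It remains to upgrade this to a \emph{mixed} identity. The operators $\Delta_y=\tau_y-I$ commute and are nilpotent, so after extending scalars to $V\otimes_{\mathbb{K}}\overline{\mathbb{K}}$ the commuting family $\{\tau_y\}$ is simultaneously triangularizable with all diagonal entries equal to $1$; hence the $\Delta_y$ are simultaneously strictly upper triangular along a common flag $0=W_0\subset\cdots\subset W_N=V\otimes_{\mathbb{K}}\overline{\mathbb{K}}$, and any product of $N$ of them annihilates the whole space. Since each $\Delta_{h_i}$ is $\mathbb{K}$-linear, this operator identity restricts to $V$, giving $\Delta_{h_1}\cdots\Delta_{h_N}f=0$ for all $h_1,\dots,h_N\in\mathbb{K}^d$. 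By the group-theoretic case $(a)$ of Theorem \ref{Montel_commt_mixed} (with $E=\mathbb{K}^d$), $f$ is a polynomial map of functional degree at most $N-1$.

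Finally, translation invariance of $V$ gives $\mathrm{span}_{\mathbb{K}}\{\tau_h f:h\in\mathbb{K}^d\}\subseteq V$, which is finite-dimensional, so Theorem \ref{char1} shows that $f$ is a (normal) polynomial. The main obstacle is the step in the third paragraph, passing from the per-$y$ nilpotency $\Delta_y^{\,N}=0$ to the mixed identity $\Delta_{h_1}\cdots\Delta_{h_N}=0$: the Frobenius statement recalled in the text is phrased for two operators, so one must justify the simultaneous triangularization of the \emph{entire} commuting family $\{\tau_y\}$ (equivalently, invoke that in characteristic $0$ the unmixed and mixed Fréchet equations are equivalent). The hypothesis $\mathrm{char}\,\mathbb{K}=0$ is used essentially in two places, first to apply Lemma \ref{raices} and then to guarantee this equivalence.
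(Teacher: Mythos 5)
Your proof is correct and, in its skeleton, is the same as the paper's: the conjugation identity $\tau_{ky}=(O_{D_k})^{-1}\tau_y O_{D_k}$ together with $\tau_{ky}=\tau_y^{\,k}$, invariance of the spectrum under $z\mapsto z^k$ for infinitely many $k$, Lemma \ref{raices} to conclude that every $\tau_y$ is unipotent, Cayley--Hamilton to get $\Delta_y^{N}f=0$ for all $y$, and Theorem \ref{char1} to finish -- so you introduce no dependencies beyond those the paper itself uses. The one genuine difference is the step you isolate in your third paragraph. The paper passes from the unmixed identity $\Delta_y^{N}f=0$ (all $y$) directly to ``$f$ is a polynomial function of functional degree at most $N-1$'', which tacitly invokes the equivalence of the unmixed and mixed Fr\'echet equations (Djokovic's theorem \cite{D}, \cite{La}, valid here because $\mathbb{K}$ is a $\mathbb{Q}$-vector space); you instead prove the mixed identity $\Delta_{h_1}\cdots\Delta_{h_N}f=0$ directly, by simultaneously triangularizing the commuting family $\{\tau_y\}$ over $\overline{\mathbb{K}}$ so that every $\Delta_y$ becomes strictly upper triangular. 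That step is sound: although the Frobenius statement recalled in the paper concerns two operators, simultaneous triangularizability of an arbitrary commuting family on a finite-dimensional space over an algebraically closed field is standard (the algebra generated by the family is finite-dimensional and commutative, so it suffices to triangularize finitely many generators of it; see \cite{RR}), and the resulting operator identity, being an identity of $\mathbb{K}$-linear maps, restricts from $V\otimes_{\mathbb{K}}\overline{\mathbb{K}}$ to $V$. Your appeal to Theorem \ref{Montel_commt_mixed} at that point is superfluous -- once the mixed equation holds for all steps in $\mathbb{K}^d$, $f$ has functional degree $\le N-1$ by definition -- but harmless. The net effect is that your argument is self-contained precisely where the paper's relies on an external equivalence, at the cost of one extra (standard) linear-algebra fact.
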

\begin{proof}
Indeed, given $h\in\mathbb{K}^d$, we have $(\tau_h)^k=\tau_{kh}=\tau_{D_kh}=(O_{D_k})^{-1}\tau_{h}O_{D_k}$, 
which means that $\tau_h$ and $(\tau_h)^k$ share the same spectrum for infinitely many $k\in\mathbb{N}$. 
Let $S=\{\lambda_1,\cdots,\lambda_s\}$ be the spectrum of $\tau_h:V\to V$ (eliminating multiplicities, so that the eigenvalues are pairwise distinct).
Note that $\tau_h(f)=\lambda f$ implies $(\tau_h)^k(f)=\lambda^k f$. 
We have that $S=\{\lambda_1,\cdots,\lambda_s\}=\{\lambda_1^k,\cdots,\lambda_s^k\}$ for infinitely many $k\in\mathbb{N}$.
Since $\tau_h$ is an isomorphism for all $h$, the eigenvalues $\lambda_i$ are all nonzero. 
Lemma \ref{raices} implies that $S=\{1\}$, which means that $(z-1)^n$ is the characteristic polynomial of $\tau_h$.
Now, Hamilton's theorem implies that $\Delta_h^nf=(\tau_h-1)^nf=0$ for all $h$, where $n=\dim V$. 
Thus, $f$ is a polynomial function of functional degree at most $n-1$.
At last, Theorem \ref{char1} implies that $f=P(a_1(x),\cdots,a_s(x))$ for a certain polynomial $P\in\mathbb{K}[x_1,\cdots,x_s]$ of total degree $\leq n-1$ and certain additive functions $a_i:\mathbb{K}^d\to\mathbb{K}$.    
\end{proof}
\begin{remark}
Note that Theorem \ref{general_K_dilations} also applies for $\mathbb{K}=\mathbb{R}$, and that, for $d\geq 2$, the result cannot be proved as a corollary of Theorem \ref{mainT}, 
because the group of dilations $D=\{aI_d:a\in\mathbb{R}\setminus\{0\}\}$ satisfies $\Lambda_{D,z_0}=\mathbf{span}\{z_0\}$ for all $z_0\in\mathbb{R}^d$, which has an empty interior in these cases. 
\end{remark}
\begin{remark}
When $\mathbb{K}$ has positive characteristic, we cannot use Theorem \ref{char1}. 
Thus, it would be great to know if Theorem \ref{char1} can also be demonstrated for fields of positive characteristic. 
In such a case, Theorems \ref{general_K} and \ref{general_K_dilations} would also hold for fields of positive characteristic. 
Meanwhile, if ${\rm char}(\mathbb{K})=p>0$, and $f:\mathbb{K}^d\to\mathbb{K}$ satisfies the other hypotheses of either Theorem \ref{general_K} or Theorem \ref{general_K_dilations}, we can only claim that $f$ is a polynomial map. 
\end{remark}
%
%
%
%
%

\end{document}